\newcommand{\Rni}{\mathbb{R}^{n_{i}}}
\newcommand{\nfR}{\mathbb{R}}
\newcommand{\xpi}{x_i}
\newcommand{\xmi}{x_{-i}}
\newcommand{\fpi}{f_{i}}
\newcommand{\lip}{\left<}
\newcommand{\rip}{\right>}
\newcommand{\lvt}{\left[}
\newcommand{\rvt}{\right]}
\newcommand{\sing}[1]{{#1}_{sing}}
\newcommand{\tdx}{\tilde{x}}
\newcommand{\tdu}{\tilde{u}}
\newcommand{\ddd}{,\ldots,}
\newcommand{\mf}{\mathfrak}
\newcommand{\fall}{\quad \mbox{for all}\quad}
\newcommand{\wt}{\widetilde}
\newcommand{\td}{\tilde}
\newcommand{\wtU}{\widetilde{\mathcal{U}}}
\newcommand{\wtV}{\widetilde{\mathcal{V}}}
\newcommand{\wtW}{\widetilde{\mathcal{W}}}
\newcommand{\mbF}{\mbox{F}}
\newcommand{\re}{\mathbb{R}}
\newcommand{\cpx}{\mathbb{C}}
\newcommand{\N}{\mathbb{N}}
\newcommand{\Pj}{\mathbb{P}}
\def\gm{\gamma}
\newcommand{\st}{\mathit{s.t.}}
\newcommand{\reff}[1]{(\ref{#1})}
\newcommand{\lmd}{\lambda}
\newcommand{\pt}{\partial}
\newcommand{\dt}{\delta}
\newcommand{\mc}[1]{\mathcal{#1}}
\newcommand{\deq}{\,\coloneqq\,}
\def\rank{\mbox{rank}}
\def\cod{\mbox{codim}\,}
\newcommand{\bdes}{\begin{description}}
\newcommand{\edes}{\end{description}}
\newcommand{\bal}{\begin{align}}
\newcommand{\eal}{\end{align}}
\newcommand{\bnum}{\begin{enumerate}}
\newcommand{\enum}{\end{enumerate}}
\newcommand{\bit}{\begin{itemize}}
\newcommand{\eit}{\end{itemize}}
\newcommand{\bea}{\begin{eqnarray}}
\newcommand{\eea}{\end{eqnarray}}
\newcommand{\be}{\begin{equation}}
\newcommand{\ee}{\end{equation}}
\newcommand{\baray}{\begin{array}}
\newcommand{\earay}{\end{array}}
\newcommand{\bsry}{\begin{subarray}}
\newcommand{\esry}{\end{subarray}}
\newcommand{\bca}{\begin{cases}}
\newcommand{\eca}{\end{cases}}
\newcommand{\bcen}{\begin{center}}
\newcommand{\ecen}{\end{center}}
\newcommand{\bbm}{\begin{bmatrix}}
\newcommand{\ebm}{\end{bmatrix}}
\newcommand{\btab}{\begin{tabular}}
\newcommand{\etab}{\end{tabular}}
\theoremstyle{definition}
\newtheorem{theorem}{Theorem}[section]
\newtheorem{proposition}[theorem]{Proposition}
\newtheorem{lemma}[theorem]{Lemma}
\newtheorem{example}[theorem]{Example}
\newtheorem{remark}[theorem]{Remark}
\numberwithin{equation}{section}
\begin{document}

\title[Algebraic Degrees of GNEPs]
{Algebraic degrees of generalized Nash equilibrium problems}

\author{Jiawang Nie}
\author{Kristian Ranestad}
\author{Xindong Tang}

\address{Jiawang Nie, Department of Mathematics,\\
University of California San Diego,
9500 Gilman Drive, La Jolla, California, {\rm92093}, USA}
\email{njw@math.ucsd.edu}

\address{Kristian Ranestad,
Department of Mathematics,\\
University of Oslo,
PB 1053 Blindern, {\rm0316} Oslo, Norway}
\email{ranestad@math.uio.no}

\address{Xindong Tang, Department of Mathematics,\\
Hong Kong Baptist University,
Kowloon Tong, Kowloon, Hong Kong.}
\email{xdtang@hkbu.edu.hk}

\begin{abstract}
This paper studies the algebraic degree of
generalized Nash equilibrium problems (GNEPs) given by polynomials.
Their generalized Nash equilibria (GNEs),
as well as their KKT or Fritz-John points,
are algebraic functions in the coefficients of defining polynomials.
We study the degrees of these algebraic functions,
which also count the numbers of complex KKT or Fritz-John points.
Under some genericity assumptions, we show that
a GNEP has only finitely many complex Fritz-John points
and every Fritz-John point is a KKT point.
We also give formulae for algebraic degrees of GNEPs,
which count the numbers of complex Fritz-John points for generic cases.
\end{abstract}

\keywords{generalized Nash equilibrium, algebraic degree, polynomial,
Fritz-John point, variety}

\subjclass[2020]{90C23, 14Q15, 91A06}

\maketitle

\section{Introduction}
We consider generalized Nash equilibrium problems (GNEPs).
Suppose there are $N$ players and the $i$th player's strategy vector is
$x_{i}\in\mathbb{R}^{n_{i}}$ (the $n_i$-dimensional real Euclidean space).
Denote
\[x_i \coloneqq  (x_{i,1},\ldots,x_{i,n_i}),\quad x \coloneqq (x_1,\ldots,x_N).\]
The total dimension of all strategy vectors is $n  \coloneqq  n_1+ \ldots + n_N.$
When the $i$th player's strategy is considered,
we use $x_{-i}$ to denote the subvector of all players' strategies
except the $i$th one, i.e.,
\[
x_{-i} \,  \coloneqq  \, (x_1, \ldots, x_{i-1}, x_{i+1}, \ldots, x_N).
\]
For convenience, we also write $x=(x_{i},x_{-i})$ when $x_i$ is considered.
The main purpose of the GNEP is to find a tuple of strategies
$u = (u_1, \ldots, u_N)$
such that each $u_i$ is a minimizer of the $i$th player's optimization
\be
\label{eq:GNEP}
\mbox{F}_i(u_{-i}): \quad
\left\{ \begin{array}{cl}
\min\limits_{\xpi\in \Rni}  &  \fpi(x_i,\xmi) \\
\st & g_{i,j}(x_i,\xmi)  = 0 \ (j\in\mc{E}_{i,1}), \\
    & g_{i,j}(x_i,\xmi)  \ge 0 \ (j\in\mc{E}_{i,2}).
\end{array} \right.
\ee
In the above, the $\mc{E}_{i,1}$ and $\mc{E}_{i,2}$
are disjoint labeling sets (possibly empty),
the $f_i$ and $g_{i,j}$ are continuous functions in $x$. Note that
$\xmi \,  \coloneqq  \, (x_1, \ldots, x_{i-1}, x_{i+1}, \ldots, x_N).$
The $i$th player's optimization is parameterized by other players' strategies.
A strategy tuple $x = (x_1, \ldots, x_N)$ satisfying the above
is called a generalized Nash equilibrium (GNE).

When all the defining functions $f_i$ and $g_{i,j}$ are polynomials in $x$ with real coefficients,
the GNEP is called a generalized Nash equilibrium problem of polynomials.
If for every player $i$, each constraining function $g_{i,j}$
depends only on $x_i$,
i.e., the $i$th player's feasible strategy set is
independent of other players' strategies,
then the GNEP is call a {\it Nash equilibrium problem} (NEP)
and the corresponding $u$ is called a {\it Nash equilibrium} (NE).

GNEPs are generalizations of Nash equilibrium problems \cite{Nash1951}.
They have been widely used in broad areas,
such as marketing, supply chain
management, telecommunications, and machine learning.
We refer to \cite{Liu2016,
Nagurney2021,Pang2008}
for recent applications of GNEPs.
The existence of GNEs under some continuity and convexity assumptions
is given in \cite{Aussel2021}.
For GNEPs of twice differentiable continuous functions,
generic structural properties are studied in \cite{Dorsch2013,Dorsch2013structure}.
It is typically quite a difficult question to solve GNEPs.
Some computational methods are given in \cite{GurPang09,Facchinei2009}.
In \cite{Nie2020gs,Nie2020nash,Nie2021convex,Nie2021rational},
Moment-SOS relaxation methods are given for solving NEPs and GNEPs.
We refer to \cite{Facchinei2010,Facchinei2010book} for general surveys on GNEPs.

\subsection{KKT and Fritz-John conditions}
\label{sc:optconds}

Consider the $i$th player's optimization $\mbox{F}_i(x_{-i})$,
which is parameterized by the strategies of the other players 
\[x_{-i} = (x_1, \ldots, x_{i-1}, x_{i+1}, \ldots, x_N).\]
At a feasible point $x_i$, its active labeling set $E_i$ is
\[
E_i \, \coloneqq \, \{j \in \mc{E}_{i,1}  \cup \mc{E}_{i,2} : \,
g_{i,j}(\xpi,\xmi)=0 \}.
\]
Clearly, $\mc{E}_{i,1} \subseteq E_i \subseteq\mc{E}_{i,1} \cup \mc{E}_{i,2}$.
When the linear independence constraint qualification (LICQ) holds (i.e.,
the gradient set $\{\nabla_{\xpi}g_{i,j}(\xpi,\xmi):j \in E_i\}$
is linearly independent), if $x_i$ is a minimizer of the optimization
$\mbox{F}_i(x_{-i})$,
then there exist Lagrange multipliers
$\lmd_{i,j}$ ($j \in E_i$)
such that the {\it Karush-Kuhn-Tucker} (KKT) conditions hold:
\be \label{eq:KKTwithLM}
\left\{
\begin{array}{c}
  \nabla_{x_i} f_i(x)-\sum\limits_{ j \in E_i  }
       \lambda_{i,j}\nabla_{x_i} g_{i,j}(x)=0,  \\
  g_{i,j}(x)=0\,(j  \in E_i).  \\
\end{array}
\right.
\ee
In the above, we do not mention the sign conditions
$\lambda_{i,j}\ge0\, (j\in E_i\cap\mc{E}_{i,2})$,
because they are not relevant to algebraic degrees of GNEPs.
When the LICQ fails, the KKT conditions may or may not hold.
However, no matter if the LICQ holds or not,
there always exists a new variable $\lambda_{i,0}$ such that
\be
\label{eq:FJwithLM}
\left\{
\begin{array}{c}
  \lambda_{i,0}\nabla_{x_i} f_i(x)-\sum\limits_{j \in E_i}
           \lambda_{i,j}\nabla_{x_i} g_{i,j}(x)=0,\\
    g_{i,j}(x)=0\,(j  \in E_i) , \\
 \mbox{not all} \,\, \lmd_{i,0} \,\,
 \mbox{and} \,\,  \lambda_{i,j} \,\, \mbox{are} \,\, 0.
\end{array}
\right.
\ee
A point $x_i$ satisfying \reff{eq:KKTwithLM} is called
a KKT point and $x_i$ satisfying \reff{eq:FJwithLM} is called
a Fritz-John (FJ) point, for the optimization $\mbox{F}_i(x_{-i})$.
The system \reff{eq:KKTwithLM} is called the KKT system and
\reff{eq:FJwithLM} is called the Fritz-John system.
It is important to note that every minimizer of
$\mbox{F}_i(x_{-i})$ is a Fritz-John point. When $\lambda_{i,0}\ne 0$,
the Fritz-John conditions imply the KKT conditions.
We refer to \cite{Bert97} for KKT and Fritz-John conditions in optimization.
A tuple $x=(x_1\ddd x_N)$ is said to be a {\it Fritz-John point}
for the GNEP if $x$ satisfies (\ref{eq:FJwithLM}) for each $i=1\ddd N$.
Furthermore, we remark that if we re-enumerate the active label set as
$E_i \coloneqq \{1, \ldots, m_i \}$,
then $x\in\cpx^n$ satisfies \reff{eq:FJwithLM} if and only if
\be \label{eq:jacdef}
\left\{ \baray{c}
\rank \, \bbm \nabla_{\xpi}f_i(x) & \nabla_{\xpi}g_{i,1}(x) & \cdots &
 \nabla_{\xpi}g_{i,m_i}(x) \ebm \le m_i, \\
g_{i,1}(x) = \cdots = g_{i,m_i}(x) = 0.
\earay \right.
\ee

The investigation of algebraic properties of GNEPs is interesting and quite important to the study of numerical methods for solving GNEPs.
For instance, the Lagrange multiplier expression method introduced in \cite{Nie2021convex} is guaranteed to solve the convex GNEP by solving some semidefinite programs, under the assumption that the LICQ is satisfied at all its FJ points (see \cite[Proposition~3.6, Theorem~5.2]{Nie2021convex}).
Given the FJ point $x$ with its active label set denoted as $E_i \coloneqq \{1, \ldots, m_i \}$,
the LICQ is equivalent to the nonsingularity of the Jacobian matrix
\[\bbm \nabla_{\xpi}g_{i,1}(x) & \cdots &
 \nabla_{\xpi}g_{i,m_i}(x) \ebm.\]
As shown in \cite[Example~3.8]{Nie2021convex}, there exist GNEPs where the LICQ fails to hold at some GNEs (hence FJ points).
On the other hand, an FJ point is a KKT point if the LICQ holds at it.
For a convex GNEP, every KKT point is a GNE, yet an FJ point may or may not be a solution.
Therefore, an interesting question is, when the GNEP is given by generic polynomials, is it true or not that all its Fritz-John points satisfy the LICQ?

Similar to nonconvex optimization problems, when there is no convexity assumption, not all KKT points are GNEs.
The semidefinite relaxation methods in \cite{Nie2020nash,Nie2021rational} apply feasible extensions to preclude KKT points that are not GNEs from the candidate solution set in each iteration.
Theoretically, the finiteness of the KKT point set is required to guarantee a finite convergence for these semidefinite relaxation methods.
Furthermore, the cardinality of the KKT point set, if it is finite, gives an upper bound for the number of iterations that the methods in \cite{Nie2020nash,Nie2021rational} guarantee to find a GNE or detect nonexistence of solutions.
Moreover, the finiteness and the cardinality of KKT point sets are also closely related to the effectiveness of the polyhedron homotopy method for solving GNEP in \cite{lee2023polyhedral}.
To this end, one wonders when the GNEP is given by generic polynomials, does it only have finitely many KKT points?
In addition, can we get a closed formulate for the number of KKT points in dimensions of strategy vectors and degrees of defining polynomials?
\subsection{Contributions}

This paper studies sets of KKT and Fritz-John points for GNEPs of polynomials.
We mainly focus on the finiteness for the set of KKT and Fritz-John points
under genericity assumptions,
and the number of KKT and Fritz-John points when there are finitely many of them.
The major results of this paper are:

\begin{itemize}

\item We study the Fritz-John system for the GNEP given by polynomial functions.
Under some genericity assumptions, we show that
the GNEP only has finitely many complex Fritz-John points.
Moreover, when defining polynomials are generic,
we show that every Fritz-John point is a KKT point,
and thus every GNE is a KKT point.

\item We give a formula for the algebraic degree of GNEPs
when defining polynomials are generic,
which counts the number of complex Fritz-John points,
and provides an upper bound for the iteration loops of
the computational methods in \cite{Nie2020nash,Nie2021rational}.

\item For non-generic GNEPs of polynomials,
we give an upper-bound for the number of complex Fritz-John points,
when there are finitely many of them.
Some computational examples are also given
to confirm the degree formulae.
\end{itemize}

This paper is organized as follows.
In Section~\ref{sc:pre},
we review the optimality conditions for GNEPs,
and introduce some basics of complex Fritz-John points.
In Section~\ref{sc:finite},
we study the finiteness of complex Fritz-John points for GNEPs,
under some genericity assumptions.
We consider multi-projective varieties
and their algebraic degrees in Section~\ref{sc:MPV}.
The algebraic degree formulae are given in Section~\ref{sc:degree}.
Some symbolic computational results are presented in Section~\ref{sc:ce}.

\section{Preliminaries}
\label{sc:pre}

\subsection*{Notation}
The symbol $\mathbb N$ (resp., $\mathbb Q$, $\mathbb R$, $\mathbb C$) stands for the set of
nonnegative integers (resp., rational numbers, real numbers, complex numbers).
For the integer $k\in \N$, denote the set $[k]  \coloneqq  \{1, \ldots, k\}$ if $k\ne0$,
and $[k]:=\emptyset$ if $k=0$.
We use $e_i$ to denote the vector such that the $i$th entry is
$1$, and all others are zeros.
For a nonnegative integer vector $a = (a_1, \ldots, a_k) \in \N^k$,
denote that  $|a| \coloneqq a_1 + \cdots + a_k$.
Let $a\deq(a_1\ddd a_k)$ and $b\deq(b_1\ddd b_k)$
be tuples of nonnegative integers,
and let $a\le b$ mean that $a_i\le b_i$ for each $i=1\ddd k$.
Let $\nfR[x]$ denote the ring of polynomials
with real coefficients in $x$,
and $\nfR[x]_d$ denote its subset of polynomials
whose degrees are not greater than $d$.
The notation $\mathbb{Q}[x]$, $\cpx[x]$ and $\cpx[x]_d$ are similarly defined.
For the $i$th player's strategy vector $x_i$,
the notation $\re[x_i]$ and $\re[x_i]_d$
are defined in the same way.
For the $i$th player's objective $f_i(x)$,
the $\nabla_{x_i}f_i$ means its gradient with respect to $x_i$.

In the following, we review some basic results in algebraic geometry.
Let $\cpx$ be the complex field and $\cpx[x]$ be the ring of polynomials
in the variable $x = ( x_{i,j} ).$
An {\it ideal} $I$ of $\cpx[x]$ is a subset of $\cpx[x]$ such that
$a+b \in I$ for all $a, b \in I$ and
$q\cdot p\in I$ for all $p\in I$ and $q\in \cpx[x]$.
A polynomial tuple $(p_1 \ddd p_m)$ generates the ideal
\[
\lip p_1 \ddd p_m \rip \, \coloneqq \,
\{ p_1q_1+\dots+p_mq_m: q_1\ddd q_m\in\cpx[x] \} .
\]
Every ideal is generated by a finite set of polynomials.
This is Hilbert's basis theorem (see \cite{cox2013book}).
For an ideal $I$, the set
\[
V(I) \, \coloneqq \,
\{x\in\cpx^n: p(x) = 0 \, \forall \, p \in I
\}
\]
is called the {\it affine variety of} $I$.
For a set $V \subseteq \cpx^n$, the polynomial set
\[
I(V) \,\coloneqq \,
\{p\in\cpx[x]: p(x)=0 \mbox{ for all } x\in V \}
\]
is an ideal of $\cpx[x]$.
It is called the {\it vanishing ideal of  $V$}.
There exists extensive work about optimization with
polynomials and varieties (see
\cite{MomentSOShierarchy,Las01,LasBk15,LasICM,Lau09,LauICM,Scheid09}).
Sum of squares polynomials and matrices are useful techniques for
solving polynomial optimization (see \cite{HilNie08,PMI11,Nie2023moment,SOSsph12,nieopcd}).
They are also useful for solving tensor optimization
(see \cite{NYZ18,NieZhang18,Nuclear17}).
Moreover, ideals and varieties are recently exploited in the research of games
(see \cite{Datta2003,Portakal2022,Sturmfels2002}).

For the positive integer $n$, the $n$-dimensional projective space $\Pj^n$
is the set of all lines passing through the origin of $\cpx^{n+1}.$
A point $\td{z}$ in $\Pj^n$ has the coordinate
$[z_0 : z_1 : \cdots :z_n]$ such that at least one of $z_j$ is nonzero.
The coordinates of $\td{z}$ are unique up to a nonzero scaling of
$\td{z} = (z_0, z_1, \ldots, z_n)$.
Let $(p_1 \ddd p_m)\subseteq \cpx[\td{z}]$
be a tuple of homogeneous polynomials in $\td{z}$,
the ideal $I \coloneqq \lip p_1 \ddd p_m\rip$ is called a {\it homogeneous ideal}.
It determines the projective algebraic variety in $\Pj^n$:
\[
U \, \coloneqq \,
\{\td{z}\in\Pj^n: p_1(\td{z})=\dots=p_m(\td{z})=0\}.
\]
For positive dimensions $n_1 \ddd n_N$ such that $n_1+\dots+n_N=n$,
the Cartesian product $\Pj^{n_1}\times\dots\times\Pj^{n_N}$
is called a multi-projective space.
For the tuple of positive integers $\nu\,\coloneqq\,(n_1\ddd n_N)$, denote
\[\mathbb{P}^\nu\,\coloneqq\,\mathbb{P}^{n_1}\times\dots\times\mathbb{P}^{n_N}.\]
Given the tuple $\td{x} \coloneqq (\td{x}_1 \ddd \td{x}_N)$ of vector variables,
a polynomial $p \in\cpx[\td{x}]$ is {\it multi-homogeneous} in
$(\td{x}_1 \ddd \td{x}_N)$
if for every $i\in [N]$,
the polynomial $p(\td{x}_i,\td{x}_{-i})$ is homogeneous in $\td{x}_i$.
Similarly, an ideal is said to be multi-homogeneous
if it is generated by a set of multi-homogeneous polynomials. A set like
\[
W = \{\td{x}\in\Pj^{\nu}:
p_1(\td{x})=\dots=p_m(\td{x})=0 \}
\]
is called an {\it multi-projective} variety
if every $p_j$ is multi-homogeneous.

For the affine (resp., projective, multi-projective) space,
a {\it hypersurface} is given by a single
(resp., homogeneous, multi-homogeneous) polynomial equation.
A {\it hyperplane} is given by a single linear equation.
In the {\it Zariski topology}, closed sets are varieties
and open sets are complements of varieties.
A variety $V$ is {\it irreducible} if it is not a union
of two distinct proper subvarieties.
Every variety is a union of finitely many irreducible subvarieties.
Each of these irreducible subvarieties is called an {\it irreducible component}.
Throughout the paper, a property is said to hold {\it generically}
if it holds for all points in the space of input data
except a set of Lebesgue measure zero.

The {\it dimension} of an irreducible projective variety $V$
is defined to be the length $k$ of the longest chain of irreducible subvarieties
$V=V_0\supsetneq V_1 \supsetneq\dots\supsetneq V_k$.
For an irreducible multi-projective variety
$W\subseteq\Pj^{n_1}\times\dots\times\Pj^{n_N}$,
the dimension of $W$ equals the largest number $k$
such that there exist nonnegative integers $l_1 \ddd l_N$, with each
$l_i\le n_i$ and $l_1+\dots +l_N=k$, satisfying that
(recall that $[l_i]=\emptyset$ if $l_i=0$)
\be \label{eq:Wintersect}
\left\{ (\td{x}_1, \ldots, \td{x}_N) \in W \left| \,
\begin{array}{c}
(u_{i,j})^T \td{x}_i =0, \\
\forall i \in [N] ,\,  j \in [l_i]
\end{array}\right.\right\}
\ee
is nonempty for all $u_{i,j} \in \Pj^{n_i}$.
The dimension of a variety equals the biggest dimension of its irreducible subvarieties,
and the {\it codimension} of a $k$-dimensional variety in $\Pj^{\nu}$ equals $n-k$.
For the variety $W$, its dimension $\dim  W=0$ if and only if
its cardinality $|W|<\infty$.
A variety is said to have a {\it pure} dimension if
all of its irreducible components have the same dimension.
A pure $k$-dimensional variety $W$
is said to be a {\it complete intersection}
if its vanishing ideal is generated by $n-k$ polynomials.
If $W$ is an irreducible variety of dimension $k$ and a homogeneous $p$
is not identically zero on $W$, then the zero locus of $p$ in $W$
has dimension $k-1$ (see Section~1.6.1 of \cite{Shafarevich2013}).
Moreover, two multi-projective varieties $W_1$, $W_2$ in $\Pj^{\nu}$
are said to {\it intersect properly} if
\[
\dim {W_1\cap W_2} \,= \, \dim W_1+\dim W_2-n.
\]
For a pure $k$-dimensional projective variety $V$,
its algebraic degree is the number of points in the intersection
of $V$ and $k$ generic hyperplanes.
The algebraic degree of a multi-projective variety
$W  \subseteq  \Pj^{n_1}\times\dots\times\Pj^{n_N}$
is an array $\deg W$ indexed by tuples $l=(l_1 \ddd l_N) \in \N^N$ with $l_1+\dots+l_N = \dim \, W$ and $l_i\le n_i$ for all $i\in[N]$,
such that $(\deg W)_l$ counts the number of points in the intersection given by (\ref{eq:Wintersect}) for generic $u_{i,j}\in\Pj^{n_i}$ \cite{VDW1978}.
In particular, when $\dim W = 0$, the algebraic degree $\deg W$
counts the number of points in $W$.

Smoothness is useful when we consider algebraic degrees.
For an irreducible multi-projective variety $W$,
whose vanishing ideal is defined by multi-homogeneous polynomials $p_1\ddd p_m$,
its {\it(multi-projective) tangent space} at a point $\td{u}\in W$ is
\[\mathbb{T}_{\td{u}}W \deq \left\{\td{x}\in\Pj^{\nu} \,:\,
\bbm \nabla_{\td{x}} p_1(\td{u}) & \cdots & \nabla_{\td{x}} p_m(\td{u}) \ebm^T
\cdot \td{x} = 0 \right\}.\]
The dimension of $\mathbb{T}_{\td{u}}W$ is at least $\dim  W$ for every $u\in W$
(see \cite[Theorem~2.3]{Shafarevich2013}).
The variety $W$ is said to be smooth at $\td{u}$ if
\be\label{eq:smooth}
\dim  \mathbb{T}_{\td{u}}W = \dim W.
\ee
In other words,
the $\td{u}$ is a smooth point if and only if the rank of
$J(\td{u})$ equals the codimension of $W$.
When $W$ is reducible, the smoothness for $\td{u}$ is defined locally.
That is, if we let $W_u$ be the irreducible component
which has the largest dimension over all irreducible components containing $\td{u}$,
then we say $\td{u}$ is a smooth point of $W$ if it is a smooth point of $W_u$.
A point in $W$ is called {\it singular} or {\it nonsmooth} if it is not a smooth point of $W$,
and the set of all singular points of $W$ is denoted as $\sing{W}$.
Moreover, the $W$ is said to be smooth if $\sing{W}=\emptyset$.
For two projective varieties $W_1$ and $W_2$,
we say they {\it intersect transversely at
$\td{u}\in (W_1\cap W_2)\setminus (\sing{(W_1)}\cup\sing{(W_2)})$} if
\[
\dim \, (\mathbb{T}_{\td{u}}W_1+\mathbb{T}_{\td{u}}W_2)
\,  = \, \dim  \Pj^{n_1}\times\dots\times\Pj^{n_N}\, =\, n.
\]
For every irreducible component $W'$ in $W_1\cap W_2$,
if the intersection
\[
W'\cap \big( \sing{(W_1)}\cup\sing{(W_2)} \big)
\]
is a proper subset of $W'$,
and $W_1$ and $W_2$ intersect transversely at every point in
\[
(W_1\cap W_2)\setminus \big( \sing{(W_1)}\cup\sing{(W_2)} \big),
\]
then they are said to {\it intersect transversely}.
If $W_1\cap W_2 =\emptyset$, then they intersect transversely.
In general, two varieties $W_1$ and $W_2$ intersect transversely if
$\cod W_1+\cod W_2=\cod (W_1\cap W_2)$
and their intersection is smooth outside their singular loci.
We refer to \cite{Shafarevich2013,Harris1992,Ful}
for more details on smoothness and transversal intersections for varieties.

Bertini's Theorem considers the smoothness of intersections.
The following result from \cite{Nie2009algebraic}, as a corollary of Bertini's Theorem,
is frequently used in this paper, and we refer to \cite{Harris1992} for more details about smoothness.
\begin{proposition}[See {\cite[Theorem A.1]{Nie2009algebraic}}]\label{pr:bertini}
Let $W$ be a $k$-dimensional multi-projective variety in $\Pj^{\nu}$, and let $\Pj^m$ be a projective space parameterizing hypersurfaces in $\Pj^{\nu}$.
Let $Z = \cap_{\mc{H} \in \Pj^m} \mc{H}$ be the common points of these hypersurfaces.
Then for a generic $\mc{H}\in \Pj^m$, the intersection $Y: = W \cap \mc{H}$ is a multi-projective variety of dimension $k-1$,
and \[\sing{(Y)}\subseteq(\sing{W}\cap Y)\cup Z.\]
\end{proposition}

By Proposition~\ref{pr:bertini},
if we let $p_1(\tdx)\ddd p_k(\tdx)$ be generic\footnote{In our context, we say that a multi-homogeneous polynomial is generic if it is generic in the space of multi-homogeneous polynomials in $\tdx$ for a fixed multi-degree.}
multi-homogeneous polynomials in $\Pj^{\nu}$,
then hypersurfaces defined by each $p_i(\tdx)=0$ intersect transversely,
i.e., the intersection is smooth, and its dimension equals $n-k$.
Moreover, let $W\subseteq\Pj^{\nu}$ be a multi-projective variety,
and let $p_1,\dots,p_k$ be multi-homogeneous polynomials such that the coefficients of each $p_i$ are parameterized by $\beta_i\in\Pj^{m_i}$ .
By implementing Proposition~\ref{pr:bertini} repeatedly we may conclude:  If for every $i=1\ddd k$,
the $p_i(\tdx)=0$ does not have any fixed point in $W\cap\{\tdx\in\Pj^{\nu}: p_1(\tdx)=\dots=p_{i-1}(\tdx)=0\}$ when we vary $\beta_i$,
then $W$ intersects $\{\tdx\in\Pj^{\nu}: p_1(\tdx)=\dots=p_{k}(\tdx)=0\}$ transversely for a generic choice of $\beta_1\ddd\beta_k$.

For an affine variety $X \subseteq \cpx^{n_1} \times \cdots \times \cpx^{n_N}$,
the embedding of $X$ in the multi-projective space $\Pj^{n_1}\times\dots\times\Pj^{n_N}$ consists of all points
\[
\Pj(x) \, \coloneqq \,
(\td{x}_1,\td{x}_2\ddd \td{x}_N)
\]
with $\td{x}_i \coloneqq (1,x_{i,1},x_{i,2}\ddd, x_{i,n_i})$ and $(x_1\ddd x_N) \in X$.
The set of all $\Pj(x)$ with $x \in X$ is denoted as $\Pj(X)$.
For convenience, we say that a multi-projective variety $Y$
contains the affine variety $X$ if it contains $\Pj(X)$.

\subsection{Algebraic degrees of multi-projective varieties}
For a multi-projective variety $\mc{X}$ in
$\mathbb{P}^{\nu} \,\coloneqq\, \Pj^{n_1}\times\dots\times\Pj^{n_N}$,
the algebraic degree of $\mc{X}$ is a nonnegative integer vector,
labeled by a tuple
\[
l \,\coloneqq\, (l_1,\dots,l_N)\in\N^N, \quad
\text{with} \quad  l\le\nu, \,\,  |l|=\dim \mc{X},
\]
such that each $(\deg\mc{X})_l$ counts the number of points in the intersection
\be  \label{eq:degX}
\bigcap_{i=1}^N \Big \{\tilde{x} \in \mc{X}:
(u_{i,1})^T\tilde{x}_i=\dots
= (u_{i,l_i})^T\tilde{x}_i=0 \Big \},
\ee
where $u_{i,1},\dots,u_{i,l_i}$ are generic vectors in $\Pj^{n_i}$ \cite{VDW1978}.
For the case that $\dim \mc{X}=0$,
the $\deg \mc{X}$ is just a nonnegative integer
which counts the cardinality $|\mc{X}|$ of $\mc{X}$.

Let $k$ be a positive integer and $\delta \,\coloneqq\,
(\delta_1,\dots,\delta_N)\in\N^N$ such that $\delta_1+\dots+\delta_N=k$,
and let $\mathbb{Z}_2\deq\{0,1\}$ be the binary set.
Denote the set
\[
\mathbb{Z}_2^{[\delta\times k]} \coloneqq
\left\{
A = (A_{i,j})  \in  \mathbb{Z}_2^{N\times k} \left|
\begin{array}{l}
\sum\limits_{j = 1,\ldots, k} A_{i,j}  =\delta_i\,
(i\in[N]),\\
\sum\limits_{i = 1,\ldots, N} A_{i,j} = 1 \,
\, (j\in[k])
\end{array}
\right.\right\}.
\]
For the matrix $A \in \mathbb{Z}_2^{[\delta\times k]}$,
we denote the $j$th column by $A_{:,j}$.
Let $z\deq(z_1,\dots,z_k)$ be a tuple of vector variables with each
$z_i = (z_{i,1}, \ldots, z_{i,N}) $.
Define the polynomial in $z$
\be\label{eq:mcA}
\mc{A}_{\delta}(z_1,\dots,z_k) \,\coloneqq\,
\sum_{A\in \mathbb{Z}_2^{[\delta\times k]}  }
(z_1)^{A_{{:,1}}} (z_2)^{A_{{:,2}}} \dots (z_k)^{A_{{:,k}}}.
\ee
In the above, the $z_i^{A_{{:,i}}}\,\coloneqq\,
(z_{i,1})^{A_{1,i}}(z_{i,2})^{A_{2,i}}\dots (z_{i,N})^{A_{N,i}}$.
One may directly verify that
$\mc{A}_{\delta}(z_1,\dots,z_k)$
is the coefficient of
$t_1^{\delta_1}t_2^{\delta_2}\dots t_N^{\delta_N}$ in
\[
\prod_{i=1}^k(z_{i,1}t_1+\dots+z_{i,N}t_N),
\]
by expanding the product above.

We study the algebraic degree for the intersection of multi-projective varieties.
First, we consider complete intersections in $\Pj^{\nu}$.
For a given nonnegative integer $s$, we denote
\be\label{eq:[s]}
[s]^{(\nu)}\deq\{l\in\N^N: |l|=s,\ l\le \nu\}.
\ee
It stands for the set of all possible labels of the algebraic degree for any
$s$-dimensional multi-projective variety in $\Pj^{\nu}$.

\begin{lemma}
\label{lm:completedeg}
Let $\nu \,\coloneqq\, (n_1\,\dots,n_N)$ and $n=n_1+\ldots +n_N$.
Suppose $h_1 \ddd h_k$ are generic multi-homogeneous polynomials in
$\tilde{x} = (\td{x}_1, \ldots, \td{x}_N)$,
and the multi-degree for each $h_i$ is $d_i=(d_{i,1},\dots,d_{i,N})$.
Consider the multi-projective variety
\[\mc{H} \,\coloneqq\, \{\tilde{x}\in\mathbb{P}^{\nu}:
h_1(\tilde{x})=\dots=h_k(\tilde{x})=0 \}.
\]

{\rm(i)} If $n=k$, then $\dim \mc{H}=0$ and
\[ \deg{\mc{H}}=\mc{A}_{\nu}(d_1,\dots,d_k).\]

{\rm(ii)}If $n>k$, then $\dim \mc{H}=n-k$ and
for each $l\in [n-k]^{(\nu)}$,
\[
  (\deg{\mc{H}})_{l}=\mc{A}_{\nu-l}(d_1,\dots,d_k).
\]
\end{lemma}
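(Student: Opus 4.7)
The plan is to prove case (i) directly by computing in the Chow ring of $\Pj^{\nu}$, and then deduce case (ii) by appending generic linear forms and invoking case (i) on the enlarged system.

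For case (i), genericity together with iterated application of Proposition~\ref{pr:bertini} first gives that $\mc{H}$ is a reduced zero-dimensional complete intersection, so $\deg\mc{H}=|\mc{H}|$. To count $|\mc{H}|$, I would work in the Chow ring
\[
A^{*}(\Pj^{\nu})\,\cong\,\mathbb{Z}[t_1,\ldots,t_N]/(t_1^{n_1+1},\ldots,t_N^{n_N+1}),
\]
where $t_i$ is the pullback of a hyperplane class from $\Pj^{n_i}$ and $t_1^{n_1}\cdots t_N^{n_N}$ is the point class. A general multi-homogeneous hypersurface of multi-degree $d_i$ represents $d_{i,1}t_1+\cdots+d_{i,N}t_N$, and by transversality the class of $\mc{H}$ is the product
\[
[\mc{H}] \,=\, \prod_{i=1}^k(d_{i,1}t_1+\cdots+d_{i,N}t_N) \,=\, \sum_{|\delta|=k}\mc{A}_{\delta}(d_1,\ldots,d_k)\,t_1^{\delta_1}\cdots t_N^{\delta_N},
\]
where the second equality is the definition \reff{eq:mcA} of $\mc{A}_{\delta}$. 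When $k=n$, extracting the coefficient of the point class (i.e.\ $\delta=\nu$) yields $\deg\mc{H}=\mc{A}_{\nu}(d_1,\ldots,d_k)$.

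For case (ii), I would fix $l\in[n-k]^{(\nu)}$ and, for each $i$, adjoin to the defining system the $l_i$ generic linear forms $u_{i,j}^T\tilde{x}_i=0$ appearing in \reff{eq:degX}; each such form has multi-degree $e_i\in\N^N$. The enlarged system consists of $k+|l|=n$ multi-homogeneous polynomials, and Proposition~\ref{pr:bertini} again ensures its zero locus is zero-dimensional and reduced. Applying case (i) to the enlarged system gives
\[
(\deg\mc{H})_l \,=\, \mc{A}_{\nu}\bigl(d_1,\ldots,d_k,\underbrace{e_1,\ldots,e_1}_{l_1},\ldots,\underbrace{e_N,\ldots,e_N}_{l_N}\bigr).
\]
From the generating-function form of \reff{eq:mcA}, each $z=e_i$ contributes a single factor of $t_i$ to the product $\prod_i(z_{i,1}t_1+\cdots+z_{i,N}t_N)$, so the right-hand side equals the coefficient of $t_1^{n_1-l_1}\cdots t_N^{n_N-l_N}$ in $\prod_{i=1}^k(d_{i,1}t_1+\cdots+d_{i,N}t_N)$, which is exactly $\mc{A}_{\nu-l}(d_1,\ldots,d_k)$.

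The hard part is verifying transversality of the successive intersections, so that the Chow-ring product computes the set-theoretic count. This reduces to checking that at every stage the linear system of multi-homogeneous polynomials of the relevant multi-degree has no base points on the partial intersection built from the previous forms. Because every $d_i$ is a nonzero tuple and each $e_i$ is basepoint-free on all of $\Pj^{\nu}$, the base locus is empty at every stage, and the iterated application of Proposition~\ref{pr:bertini} highlighted after its statement supplies the required transversality, so the Chow-ring computation is legitimate and both (i) and (ii) follow.
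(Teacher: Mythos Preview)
Your proposal is correct and follows essentially the same route as the paper. The only cosmetic difference is that for part (i) you compute explicitly in the Chow ring $A^{*}(\Pj^{\nu})\cong\mathbb{Z}[t_1,\ldots,t_N]/(t_i^{n_i+1})$, whereas the paper simply cites \cite[Chapter~4, Section~2.1]{Shafarevich2013} for the same coefficient formula; your reduction of (ii) to (i) by adjoining generic linear forms of multi-degree $e_i$ and reading off the coefficient of $t^{\nu-l}$ is exactly what the paper does.
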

\begin{proof}
(i) When $k=n$, the $\mc{H}$ is a zero-dimensional multi-projective variety, since all $h_1,\dots,h_k$ are generic.
The algebraic degree of $\mc{H}$ equals the coefficient of $t^{\nu}$ in the product
\be \label{eq:linformdeg}
\prod_{i=1}^n(t_1d_{i,1}+\dots+t_Nd_{i,N}).
\ee
This is shown in \cite[Chapter~4, Section~2.1]{Shafarevich2013}.
The coefficient of $t^{\nu}$ in (\ref{eq:linformdeg}) coincides with $\mc{A}_{\nu}(d_1,\dots,d_k)$.

(ii) When $k< n$ and $h_1,\dots,h_k$ are generic,
the variety $\mc{H}$ is smooth and $\dim \mc{H}=n-k$,
by Proposition~\ref{pr:bertini}.
For $l=(l_1,\dots,l_N)\in\N^N$ such that $|l|=n-k$,
the $(\deg {H})_l$ counts the number of points in the intersection
(recall that $[l_i] = \emptyset$ if $l_i=0$)
\be\label{eq:degH}
\bigcap_{i=1}^N  \Big\{\tilde{x} \in \mc{H}:
(u_{i,j})^T\tilde{x}_N =0,\,\forall j\in[l_i] \Big\},
\ee
for generic vectors $u_{i,j}\in\Pj^{n_i}$.
The multi-degree of the linear form $u_{i,j}^T\tilde{x}_i$
is the unit vector $e_i$.
Furthermore, since these linear forms are generic and $\dim \mc{H}=l_1+\dots+l_N$,
the intersection $\mc{H}_l$ is zero dimensional.
By (i), its algebraic degree is the coefficient of $t^{\nu}$ in the product
\[t_1^{l_1}\cdots t_N^{l_N}\cdot \prod_{i=1}^k(t_1d_{i,1}+\dots+t_Nd_{i,N}),\]
which equals the coefficient of $t^{\nu-l}$ in
$\prod_{i=1}^k(t_1d_{i,1}+\dots+t_Nd_{i,N}).$
So, we get $(\deg{\mc{H}})_{l}=\mc{A}_{\nu-l}(d_1,\dots,d_k)$.
\end{proof}

Let $\mc{X}$ and $\mc{Y}$ be two multi-projective varieties in $\mathbb{P}^{\nu}$
such that $\dim \mc{X}+\dim \mc{Y}=n$ and their intersection is transversal.
Then the algebraic degree for the intersection $\mc{X}\cap\mc{Y}$ is given by
(the set $[\dim\mc{X}]^{(\nu)}$ is given as in (\ref{eq:[s]}))
\be\label{eq:multibezout}
\deg \mc{X}\cap\mc{Y} \,= \,
\sum_{l\in[\dim\mc{X}]^{(\nu)}}(\deg (\mc{X}))_{l}
\cdot(\deg (\mc{Y}))_{\nu-l}.
\ee
This is a generalization of the Bez\'{o}ut's Theorem to the multi-projective spaces (see \cite{VDW1978} for more details).
Furthermore, we have the following useful lemma:

\begin{lemma}  \label{lm:interdeg}
Suppose the $\mc{X}$ and $\mc{Y}$ are two multi-projective varieties in
$\mathbb{P}^{\nu}$ such that $\cod \mc{X}=k_1$ and $\cod \mc{Y}=k_2$,
and both of them are equidimensional\footnote{We say a variety is {\it equidimensional} if all of its irreducible components have the same dimension.}.
If the intersection $\mc{X}\cap\mc{Y}$ is transversal,
then $\dim  \mc{X}\cap\mc{Y}=n-k_1-k_2$, and for each $l\in [n-k_1-k_2]^{(\nu)}$,
\[
(\deg \mc{X}\cap\mc{Y})_l  \quad =
\sum_{\substack{l^{(1)}\in[n-k_1]^{(\nu)}, \\ l^{(1)}\ge l}}
(\deg \mc{X})_{l^{(1)}}
\cdot(\deg \mc{Y})_{\nu+l-l^{(1)}}.\]
\end{lemma}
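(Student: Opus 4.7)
The plan is to reduce Lemma~\ref{lm:interdeg} to the multi-projective Bez\'out formula (\ref{eq:multibezout}) by first slicing $\mc{X}\cap\mc{Y}$ with a generic linear subspace of the right codimension, and then re-applying the definition of the multi-degree to peel off that slice.

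First, I would note that $\dim(\mc{X}\cap\mc{Y})=n-k_1-k_2$ is immediate from the transversality hypothesis together with the codimension formula stated just before Theorem~\ref{tm:bertini}. Fix $l\in[n-k_1-k_2]^{(\nu)}$. By the definition (\ref{eq:degX}) of the algebraic degree, $(\deg\mc{X}\cap\mc{Y})_l$ is the number of points in
\[
\mc{X}\cap\mc{Y}\cap L, \qquad L\,\coloneqq\,\bigcap_{i=1}^N\bigcap_{j=1}^{l_i}\{\tilde{x}\in\Pj^{\nu}:u_{i,j}^T\tilde{x}_i=0\},
\]
for generic $u_{i,j}\in\Pj^{n_i}$. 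Note that $L$ has codimension $|l|=n-k_1-k_2$.

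Next I would slice $\mc{X}$ alone. Since the linear forms cutting out $L$ are general, Proposition~\ref{pr:bertini} (applied inductively to each hyperplane) guarantees that $\mc{X}\cap L$ is equal-dimensional of dimension $(n-k_1)-(n-k_1-k_2)=k_2$, that $\mc{X}\cap L$ meets $\mc{Y}$ transversally, and that the sum of dimensions of $\mc{X}\cap L$ and $\mc{Y}$ is exactly $n$. Hence the multi-projective Bez\'out formula (\ref{eq:multibezout}) applies and gives
\[
|\mc{X}\cap\mc{Y}\cap L|\;=\;\sum_{l'\in[k_2]^{(\nu)}}(\deg(\mc{X}\cap L))_{l'}\cdot(\deg\mc{Y})_{\nu-l'}.
\]

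The third step is to evaluate $(\deg(\mc{X}\cap L))_{l'}$. By (\ref{eq:degX}) again, this counts points in $\mc{X}\cap L\cap L'$ where $L'$ is cut by $l'_i$ additional generic hyperplanes in each $\Pj^{n_i}$. Combining the $l_i$ hyperplanes defining $L$ with the $l'_i$ hyperplanes defining $L'$ yields $l_i+l'_i$ generic hyperplanes in the $i$th factor; provided $l+l'\le\nu$, this is exactly the linear slice computing $(\deg\mc{X})_{l+l'}$. Setting $l^{(1)}\coloneqq l+l'$, the constraints $l'\ge 0$, $l+l'\le\nu$, and $|l'|=k_2$ translate into $l^{(1)}\ge l$, $l^{(1)}\le\nu$, and $|l^{(1)}|=n-k_1$, i.e.\ $l^{(1)}\in[n-k_1]^{(\nu)}$ with $l^{(1)}\ge l$. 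Substituting $l'=l^{(1)}-l$ and $\nu-l'=\nu+l-l^{(1)}$ into the Bez\'out sum yields precisely the claimed formula.

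The main technical point to handle carefully is the genericity argument in the second step: one must ensure that the hyperplanes defining $L$ (and later $L'$) avoid all singular loci of $\mc{X}$, $\mc{Y}$, and $\mc{X}\cap\mc{Y}$, and that they cut each variety transversally at the correct codimension. This is where Proposition~\ref{pr:bertini} does the heavy lifting and needs to be invoked iteratively, once for each hyperplane, while checking that no fixed base points are introduced when the linear forms are varied. Once transversality of all the intermediate intersections is established, the rest is bookkeeping among the indices $l$, $l'$, $l^{(1)}$.
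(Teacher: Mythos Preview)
Your proposal is correct and follows essentially the same approach as the paper's proof: both slice $\mc{X}$ by a generic linear subspace $L$ (the paper calls it $\mc{Z}$) of codimension $|l|$, apply the multi-projective B\'ezout formula \reff{eq:multibezout} to $\mc{X}\cap L$ and $\mc{Y}$, identify $(\deg(\mc{X}\cap L))_{l'}=(\deg\mc{X})_{l+l'}$, and then substitute $l^{(1)}=l+l'$. Your treatment of the genericity issues via Proposition~\ref{pr:bertini} and of the constraint $l+l'\le\nu$ is, if anything, slightly more explicit than the paper's.
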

\begin{proof}
Denote $l \,\coloneqq\, (l_1\ddd l_N)$ and we suppose $|l|=n-k_1-k_2$.
For each $i$, denote by $\mc{Z}_i$ the intersection of $l_i$
generic hyperplanes in $\Pj^{n_i}$. Then $\mc{Z}_i\times \mathbb{P}^{\nu_{-i}}$
has a natural embedding in  $\mathbb{P}^{\nu}$.
Let $\mc{Z}$ be the intersection of the $N$ embedded varieties $\mc{Z}_i\times \mathbb{P}^{\nu_{-i}}$ in $\mathbb{P}^{\nu}$, and
denote $\hat{\mc{X}} \,\coloneqq\, \mc{X}\cap\mc{Z}$.
Since $\mc{Z}$ is defined by generic hyperplanes,
we have $\dim \hat{\mc{X}}=k_2$,
and $\hat{\mc{X}}$ intersects $\mc{Y}$ transversely.
By (\ref{eq:multibezout}), we have
\be\label{eq:hatXcapY}
\deg \hat{\mc{X}}\cap\mc{Y}=\sum_{l^{(2)}\in[k_2]^{(\nu)}}(\deg \hat{\mc{X}})_{l^{(2)}}\cdot(\deg \mc{Y})_{\nu-l^{(2)}}.\ee
Note that $\hat{\mc{X}}\cap \mc{Y}=\mc{X}\cap\mc{Y}\cap\mc{Z}$.
The left hand side of (\ref{eq:hatXcapY}) equals $(\deg \mc{X}\cap\mc{Y})_{l}$.
By definition, for any fixed $l^{(2)}\in[k_2]^{(\nu)}$,  we have
\[(\deg \hat{\mc{X}})_{l^{(2)}}=(\deg \mc{X}\cap\mc{Z})_{l^{(2)}}=(\deg \mc{X})_{l+l^{(2)}}.\]
Therefore,
we conclude this lemma by letting $l^{(1)} \,\coloneqq\, l^{(2)}+l$.
\end{proof}

\section{Finiteness of the Fritz-John Variety}
\label{sc:finite}

The set of complex points $x$ satisfying \reff{eq:jacdef} for all $i\in[N]$
is called the Fritz-John variety.
In this section, we prove that the Fritz-John variety is
finite when the polynomials are generic.

Suppose $E_i$ is the active labeling set for
the $i$th player's optimization problem $\mbF_i(\xmi)$,
which is parameterized by $\xmi = (x_1, \ldots, x_{i-1}, x_{i+1}, \ldots, x_N)$.
Note that $\mc{E}_{i,1} \subseteq E_i \subseteq \mc{E}_{i,1} \cup \mc{E}_{i,2}$.
For the convenience of discussion, we write that
\[
E_i \,= \, \{1, \ldots, m_i \}
\]
for the rest of this section.
For each $i = 1, \ldots, N$, denote the affine varieties
\[
\baray{rcl}
\mc{U}_i & \coloneqq & \{x\in\cpx^n:g_{i,j}(x)=0 \, (j \in [m_i] ) \}, \\
\mc{U}  & \coloneqq  &  \mc{U}_1\cap \cdots \cap\mc{U}_N.
\earay
\]
Consider the determinantal variety
\[
\mc{V}_i \,\coloneqq\,  \left\{  x \in  \cpx^n :
\rank \bbm  \nabla_{x_i} f_i(x) & \nabla_{x_i} g_{i,1}(x) &
  \cdots  & \nabla_{x_i} g_{i,m_i}(x) \ebm  \le m_i
 \right\}.
\]
For each $i=1,\ldots, N$, denote the intersection
$\mc{W}_i\,\coloneqq\, \mc{U}_i\cap\mc{V}_i$, then
\be \label{set:W}
\mc{W} \, \coloneqq \, \mc{W}_1\cap\dots\cap\mc{W}_N
\ee
is the Fritz-John variety, i.e., the set of all complex Fritz-John points.
We are going to show that $\mc{W}$ is a finite set
when $f_i, g_{i,j}$ are generic polynomials.

We consider the multi-projectivization of affine varieties $\mc{U},\mc{V}_i$ and $\mc{W}$.
Denote
\[
\tilde{x}_i  \,\coloneqq\, (x_{i,0},x_{i,1},\ldots,x_{i,n_i}), \quad
\tilde{x}  \,\coloneqq\,  (\tilde{x}_1, \ldots,  \tilde{x}_N  ) .
\]
Let $d_{i,0,k}$ (resp., $d_{i,j,k}$) be the degree of $f_i$ (resp., $g_{i,j}$) in $x_k$.
The degree tuples
\be \label{deg:dij}
d_{i,0} \,\coloneqq\, (d_{i,0,1},\dots,d_{i,0,N}), \quad
d_{i,j} \,\coloneqq\, (d_{i,j,1},\dots,d_{i,j,N})
\ee
are called the multi-degrees of $f_i$, $g_{i,j}$ respectively.
The multi-homogenization of the $i$th player's objective $f_i(\xpi,\xmi)$ is
\[
\tilde{f}_i(\tilde{x}_i,\tilde{x}_{-i}) \,\coloneqq\,
f_i(x_1/x_{1,0},\dots,x_N/x_{N,0})\cdot
\prod_{k=1}^N ( x_{k,0} )^{d_{i,0,k}} .
\]
The multi-homogenization of $g_{i,j}(\xpi,\xmi)$, denoted as $\tilde{g}_{i,j}(\tilde{x}_i,\tilde{x}_{-i})$, is given similarly.
For the convenience of notation, denote that
\[
\baray{c}
\nu \,\coloneqq\, (n_1,\ldots,n_N), \quad
\nu_{-i} \,\coloneqq\, (n_1,\ldots,n_{i-1},n_{i+1},\ldots, n_N), \\
\mathbb{P}^{\nu_{-i}}=\mathbb{P}^{n_1}\times\dots\times\mathbb{P}^{n_{i-1}}
\times\mathbb{P}^{n_{i+1}}\times\dots\times\mathbb{P}^{n_N}.
\earay
\]
Consider the multi-projective varieties
\be \label{sets:tdU}
\baray{rcl}
\wtU_i  &   \coloneqq  & \{
\td{x}\in
\mathbb{P}^{\nu}:
\tilde{g}_{i,j}(\tilde{x})=0 \ ( j  \in  [m_i] ) \}, \\
\wtU  & \coloneqq  &  \wtU_1 \cap \cdots \cap \wtU_N.
\earay
\ee
When $g_{i,j}$ are generic polynomials in $x$,
the codimension of $\wtU_i$ is $m_i$, by Proposition~\ref{pr:bertini},
and $\wtU$ has the codimension $m_1 + \cdots + m_N$.
In the following, we consider the general case that
\[ m_1 + \cdots + m_N \,< \, n. \]
Otherwise, the variety $\wtU$ is empty or zero-dimensional (i.e., it is finite).
When all $f_i$ and $g_{i,j}$ are generic,
the multi-degree of $\frac{\pt f_i(x)}{\pt{x_{i,k}}}$ (resp., $\frac{\pt g_{i,j}(x)}{\pt{x_{i,k}}}$) equals $d_{i,0}-e_i$ (resp., $d_{i,j}-e_i$).
Here, $e_i$ denotes the vector of all zeros
except the $i$th entry being $1$.
Define the multi-projective variety
\be\label{eq:mcVi}
\widetilde{\mc{V}}_i  \, \coloneqq \,
\left\{ \tilde{x}\in\mathbb{P}^{\nu}  :
\rank\,   \wt{J}_i(\tilde{x}_i, \tilde{x}_{-i})          \le m_i
\right\},
\ee
where $\wt{J}_i(\tilde{x}_i, \tilde{x}_{-i})$
is the homogenized Jacobian matrix:
\[
\wt{J}_i(\tilde{x}_i, \tilde{x}_{-i}) \,  \,\coloneqq\,  \,
\left[\begin{array}{cccc}
\frac{\pt\tilde{f}_i(\tilde{x})}{\pt{x_{i,1}}} & \frac{\pt\tilde{g}_{i,1}(\tilde{x})}{\pt{x_{i,1}}} & \cdots   & \frac{\pt\tilde{g}_{i,m_i}(\tilde{x})}{\pt{x_{i,1}}}  \\
\frac{\pt\tilde{f}_i(\tilde{x})}{\pt{x_{i,2}}} & \frac{\pt\tilde{g}_{i,1}(\tilde{x})}{\pt{x_{i,2}}} & \cdots   & \frac{\pt\tilde{g}_{i,m_i}(\tilde{x})}{\pt{x_{i,2}}}  \\
 \vdots &  \vdots & \ddots &  \vdots \\
\frac{\pt\tilde{f}_i(\tilde{x})}{\pt{x_{i,n_i}}} & \frac{\pt\tilde{g}_{i,1}(\tilde{x})}{\pt{x_{i,n_i}}} & \cdots   & \frac{\pt\tilde{g}_{i,m_i}(\tilde{x})}{\pt{x_{i,n_i}}}  \\
\end{array}
\right] .
\]
For convenience, denote the degrees $b_{i,j}$ for each $j = 1, \ldots, N$ such that
\[
b_{i,j} \,\coloneqq\,
\begin{cases}
d_{i,0,j}+d_{i,1,j}+\dots+d_{i,m_i,j}, &  j \ne i  ,  \\
d_{i,0,i}+d_{i,1,i}+\dots+d_{i,m_i,i} - m_i-1, & j=i  .
\end{cases}
\]
Then one can see that all $(m_i+1)$-by-$(m_i+1)$ minors of
$\wt{J}_i(\tilde{x}_i, \tilde{x}_{-i})$
are multi-homogeneous of the multi-degree $(b_{i,1},\dots,b_{i,N})$.
Let $\wt{J}_i^{\circ}(\tilde{x}_i, \tilde{x}_{-i})$
denote the submatrix of $\wt{J}_i(\tilde{x}_i, \tilde{x}_{-i})$
consisting of the right-hand side $m_i$ columns and let
\[
\baray{rcl}
\widetilde{\mc{V}}_i^{\circ}  &  \coloneqq   &
\{\tilde{x}\in\mathbb{P}^{\nu}:\rank\,
\wt{J}_i^{\circ}(\tilde{x}_i, \tilde{x}_{-i})<m_i\}, \\
\widetilde{\mc{W}}_i & \coloneqq  & \widetilde{\mc{V}}_i\cap\wtU_i,\quad
\widetilde{\mc{W}}_i^\circ \,\coloneqq\, \widetilde{\mc{V}}_i^\circ\cap\wtU_i.
\earay
\]
The $\widetilde{\mc{W}}_i$ and $\widetilde{\mc{W}}_i^\circ$ are multi-projective subvarieties of $\wtU_i$. Then
\be  \label{var:tdW}
 \wtW \,\coloneqq\, \wtW_1\cap \dots\cap\wtW_N
\ee
is a multi-projective variety that contains $\mc{W}$.
When defining polynomials are generic,
the set $\wt{\mc{W}}$ is finite, as shown in the following.

\begin{theorem} \label{tm:genfinite}
For every $i,j$, let $d_{i,j} \,\coloneqq\, (d_{i,j,1},\dots,d_{i,j,N})$
be a nonzero tuple of degrees.
If every $f_{i}$ is a generic polynomial of multi-degree $d_{i,0}$,
and $g_{i,j}$ is a generic polynomial of multi-degree $d_{i,j}$,
then we have:

\rm{(i)} There are finitely many points in the Fritz-John variety,
and hence the GNEP has finitely many Fritz-John points.

\rm{(ii)} The linear independence constraint qualification condition holds for all Fritz-John points, and hence every Fritz-John point is a KKT point.
\end{theorem}
\begin{proof}
(i) First, we show that if $m_i\le n_i$,
then the intersection $\wtW_i \deq\wtU_i\cap\wtV_i$ has the codimension $n_i$.
Consider the projection map
\[
\mf{p}_i: \wtW_i \to\mathbb{P}^{\nu_{-i}},
\quad (\td{x}_i, \td{x}_{-i}) \, \mapsto \, \td{x}_{-i}.
\]
When $g_{i,j}$ are all generic, by \cite[Proposition~17.25]{Harris1992}, we have
\[
\cod \wtU_i=m_i, \quad  \cod \wtV_i\le n_i-m_i.
\]
So, the codimension of $\wtW_i$ is at most $n_i$.
Let $\td{u}_{-i}$ be an arbitrary point in $\mathbb{P}^{\nu_{-i}}$ and let
\[
\mc{Z}_i = \{\tilde{x}\in\mathbb{P}^{\nu}:\tilde{x}_{-i}=\tilde{u}_{-i}\} .
\]
Then, by \cite[Theorem~1.24]{Shafarevich2013}, we know
\[\wtU_i\cap\widetilde{\mc{V}}_i\cap \mc{Z}_i \ne\emptyset .\]
In other words, $ \mf{p}_i(\wtW_i)=\mathbb{P}^{\nu_{-i}}$.
Therefore,
by \cite[Theorem~1.25]{Shafarevich2013}, we have
\[
\dim \wtW_i \, \le \,  \dim\, \mf{p}_i( \wtW_i ) + \dim F
= n-n_i+\dim F
\]
for all $\td{u}_{-i}\in \mathbb{P}^{\nu_{-i}}$ and
for every irreducible component $F$ of the fibre $\mf{p}_i^{-1}(\tilde{u}_{-i})$.
For the given $\tilde{u}_{-i}$, the fibre is
\[
\mf{p}_i^{-1}(\tilde{u}_{-i})
=\left\{\tilde{x}_i\in\mathbb{P}^{n_i}\left|
\begin{array}{c}
\tilde{g}_{i,1}(\tilde{x}_i,\tilde{u}_{-i})=\dots=
\tilde{g}_{i,m_i}(\tilde{x}_i,\tilde{u}_{-i})=0,\\
\rank\,\wt{J}_i(\tilde{x}_i,\tilde{u}_{-i})\le m_i
\end{array}\right.
\right\}.
\]
It is zero-dimensional for generic $\tilde{u}_{-i}\in\mathbb{P}^{\nu_{-i}}$
and for generic polynomials $f_i, g_{i,j}$,
by \cite[Proposition~2.1]{Nie2009algebraic}.
So $\dim  \wtW_i \le n-n_i$, and we conclude
\be\label{eq:dimUV}
\dim  \wtW_i = n-n_i,\quad \cod  \wtW_i = n_i .
\ee

Second, we show that the codimension of
$\wtW_i^{\circ}=\wtU_i \cap  \widetilde{\mc{V}}_i^{\circ}$ equals $n_i+1$.
For the given $\td{u}_{-i}\in \mathbb{P}^{\nu_{-i}}$
we have
\[\wtU_i\cap\widetilde{\mc{V}}_i^{\circ}\cap \mc{Z}_i =
\left\{\tilde{x}_i\in\mathbb{P}^{n_i}\left|
\begin{array}{c}
\tilde{g}_{i,1}(\tilde{x}_i,\tilde{u}_{-i})=\dots=
\tilde{g}_{i,m_i}(\tilde{x}_i,\tilde{u}_{-i})=0,\\
\rank\,\wt{J}_i^{\circ}(x_i,u_{-i})<m_i
\end{array}\right.
\right\}.
\]
By \cite[Proposition~2.1]{Nie2009algebraic},
this intersection is empty when every $g_{i,j}$
is a generic polynomial and $\td{u}_{-i}$ is generic in $\mathbb{P}^{\nu_{-i}}$.
Therefore,
\[  \dim  \, \wtW_i^{\circ} \, < \, n-\dim \mc{Z}_i = n-n_i.\]
So, we know $\wtW_i^{\circ}$ is a proper closed subset of $\wtW_i$.
Moreover, by \cite[Proposition~17.25]{Harris1992},
$\cod \wtW_i^{\circ}\le n_i+1$, so
\[
\cod \wtW_i^{\circ} \,=\, n_i+1 .
\]

Last, we show that $\dim \wtW = 0$.
Let $\lmd_i\deq(\lmd_{i,0},\lmd_{i,1}\ddd\lmd_{i,m_i})\in\cpx^{m_i+1}$
be the vector of variables.
Consider the intersection of hypersurfaces in
$\Pj^{\nu}\times \Pj^{m_i}$ given by
\be\label{eq:mcC}
\td{g}_{i,1}=0\ddd \td{g}_{i,m_i}=0,\ \wt{J}_i\cdot\lmd_i=0.
\ee
Denote this multi-projective variety by $\mc{C}_i$.
Then $\tdx\in \wtW_i$ if and only if $\tdx\in\mf{p}(\mc{C}_i)$,
where $\mf{p}$ is the projection from
$\Pj^{\nu}\times \Pj^{m_i}$ to $\Pj^{\nu}$ such that
$\mf{p}(x,\lmd_i)=x$.
By \cite[Theorem~1.25]{Shafarevich2013},
for all $\tdx\in \wtW_i$ and each irreducible component $G$
of the fibre $\mf{p}^{-1}(\tdx)$,
\[  \dim \mc{C}_i  \le \dim  \wtW_i +  \dim  G,  \]
and the equality above hold when $\tdx$ lies in an open subset of $\wtW_i$.
Note that the $\wtW_i^{\circ}$ is a proper closed subset of $\wtW_i$.
When $\tdx \in \wtW_i\setminus\wtW_i^{\circ}$,
there exists the unique $\lambda_i$ such that $\wt{J}_i\cdot\lmd_i=0$,
since the columns of $\wt{J}_i^{\circ}(\tdx)$ are linear independent.
So, we have $\dim \mc{C}_i=\dim  \wtW_i = n-n_i$,
which implies that $\cod \mc{C}_i=n_i+m_i$,
and hence $\mc{C}_i$ is a complete intersection.

We show the intersection $\bigcap_{i=1}^N  \wtW_i$
is proper by induction. Let $k\le N$ and we suppose
\[
\dim \, \bigcap_{i=1}^{k-1} \wtW_i  \, = \,  n-\sum_{i=1}^{k-1}n_i .
\]
Denote
\[
\hat{\mc{C}}_{k-1}\deq \bigcap_{i=1}^{k-1}
\wtW_i \times \Pj^{m_{k}}\subseteq \Pj^{\nu}\times \Pj^{m_k}.
\]
We have $\cod  \hat{\mc{C}}_{k-1}=\sum_{i=1}^{k-1} n_i.$
For the given polynomials $g_{k,1}\ddd g_{k,m_{k}}$, if we vary the coefficients of $f_i(x)$,
then there exist no fixed points for solutions to $\wt{J}_{k}\cdot\lmd_k$
outside of $\mf{p}^{-1}(\wtW_i^{\circ})$,
and $\wtW_i^{\circ}\subsetneq \wtW_i$.
Note that $\mc{C}_i$ is a complete intersection.
Both $\mc{C}_i$ and its projection $\wtW_i$ are pure dimensional,
and $\wtW_i^{\circ}\subsetneq\mc{Y}$
for all irreducible components $\mc{Y}$ of $\wtW_i$.
By Proposition~\ref{pr:bertini}, for generic $f_i$ and $g_{i,j}$,
we have
\[
\dim \, \mc{C}_{k}\cap \hat{\mc{C}}_{k-1}  \, = \, n-\sum_{i=1}^{k}n_i
\ge \dim \, \mf{p}(\mc{C}_{k}\cap \hat{\mc{C}}_{k-1})
= \dim \, \bigcap_{i=1}^{k} (\wtU_i\cap \widetilde{\mc{V}}_i) .
\]
Therefore, by \cite[Proposition~17.25]{Harris1992},
we have
\[ \dim \,  \bigcap_{i=1}^{k} \wtW_i \, = \,  n-\sum_{i=1}^{k}n_i.\]
From (\ref{eq:dimUV}), it is clear that $\dim  \wtW_1 =n-n_1$.
So we conclude the $\bigcap_{i=1}^N \wtW_i$ is proper by induction.
Note that $n=n_1+\dots+n_N$, so we have
\[
\dim \,  \bigcap_{i=1}^{N} \wtW_i \, = \, n-\sum_{i=1}^{N}n_i=0,
\]
which implies the finiteness of $\wtW$.

(ii) For $x\in\re^{n}$,
if $\rank\,\wt{J}^{\circ}_i(\Pj(x))=m_i$,
then the LICQ for $\mbF_i(\xmi)$ hold at $\xpi$.
Therefore, it suffices to show that
$\widetilde{\mc{W}}_i^{\circ}\cap(\cap_{i\ne j\in[N]}\widetilde{\mc{W}}_j)$
is empty for every $i\in[N]$.
Without loss of generality, we fix $i=1$ and show the emptiness of $\widetilde{\mc{W}}_1^{\circ}\cap\widetilde{\mc{W}}_2\cap\dots\cap \widetilde{\mc{W}}_N$.
In item (i), we showed that $\cod  \wtW_1^{\circ}=n_1+1$.
Besides that, for generic polynomials $f_i$ and $g_{i,j}$, the
\[\mc{C}_{-1} \coloneqq \left\{(\tdx,\lmd_2 \ddd \lmd_N)\left|
\begin{array}{c}
\td{g}_{i,1}(\tdx)=0\ddd \td{g}_{i,m_i}(\tdx)=0,\\
\wt{J}_i(\tdx)\cdot\lmd_i=0\ (i=2\ddd N)
\end{array}\right.\right\}.\]
is a complete intersection of hypersurfaces in $\Pj^{\nu}\times \Pj^{m_2}\times\dots\times\Pj^{m_N}$.
Its codimension equals $n-n_1+\sum_{i=2}^Nm_i$,
and we obtain the emptiness for
$\mc{C}_{-1} \cap \wtW_1^{\circ}$
by varying coefficients of $f_i$ and $g_{i,j}$ with $i>1$, and applying Proposition~\ref{pr:bertini} repeatedly.
Moreover, the emptiness for
$\widetilde{\mc{W}}_1^{\circ}\cap\widetilde{\mc{W}}_2\cap\dots\cap \widetilde{\mc{W}}_N$
follows from the fact that it is the image under the projection mapping
$(\tdx,\lmd_2\ddd \lmd_N)$ to $\tdx$ of
$\mc{C}_{-1}  \cap  \wtW_1^{\circ}$.
\end{proof}

\begin{remark}
By Theorem~3.1 (i), when the GNEP is given by generic polynomials,
all Fritz-John points satisfy the KKT conditions.
Note that every KKT point is a Fritz-John point with $\lmd_{i,0}=1$.
Thus, Theorem~3.1 (i) further implies that for generic GNEPs,
the set of Fritz-John points is equal to the set of KKT points.
\end{remark}

\section{Algebraic degrees of multi-projective determinantal varieties}

\label{sc:MPV}

In this section, we study multi-projective determinantal varieties and their algebraic degrees, which are useful for characterizing the algebraic degree of Fritz-John varieties for GNEPs.

Let $\delta \,\coloneqq\, (\delta_1,\dots,\delta_N)$ be a vector in $\N^N$,
$z_i=(z_{i,1},\dots,z_{i,N})$ be a vector of variables,
and $s$ be an integer such that $s\ge r\coloneqq|\delta|$.
For each $i=1,\dots,s$, define the function
\be \label{eq:mcE}
\mc{B}_{\delta}(z_1,\dots,z_s) \,\coloneqq\,
\sum_{1\le i_1<\dots<i_r\le s}
\mc{A}_{\delta}(z_{i_1},\dots,z_{i_r}).
\ee
Note that $\mc{B}_{\delta}(z_1,\dots,z_s)$ equals the coefficient of $t^{\delta}=t_1^{\delta_1}\dots t_N^{\delta_N}$ in
\[\prod_{i=1}^s(1+t_1z_{i,1}+\dots+t_Nz_{i,N}).\]
This is because the coefficient of $t^{\delta}=t_1^{\delta_1}\dots t_N^{\delta_N}$ in the product above equals the sum of coefficients of the same monomial in
\[
\prod_{j=1}^r(z_{i,1}t_1+\dots+z_{i,N}t_N),\quad 1\le i_1<\dots<i_r\le s.
\]
For convenience, denote
\[\N^{[\delta\times s]} \coloneqq \left\{B\in\N^{N\times s} \left|\
B_{i,1}+B_{i,2}+\dots+B_{i,s}=\delta_i\, (i\in[N])
\right.\ \right\} .
\]
The $j$th column of $B$ is $B_{:,j}$.
We define
\be\label{eq:mcS}
\mc{S}_{\delta}(z_1,\dots,z_s)=\sum_{B\in \N^{[\delta\times s]}}\prod_{j=1}^s\binom{|B_{:,j}|}{B_{:,j}}z_j^{B_{:,j}}.\ee
In the above, the $\binom{|B_{:,j}|}{B_{:,j}}$ is the multi-monomial coefficient
\[
\binom{|B_{:,j}|}{B_{:,j}}=\binom{|B_{:,j}|}{B_{1,j}
\ddd B_{N,j}}=\frac{(\sum_{i=1}^s B_{i,j})!}{B_{1,j}!B_{2,j}!\ldots B_{N,j}!}.
\]
One may check that $\mc{S}_{\delta}(z_1,\dots,z_s)$ equals the coefficient of $t^{\delta}=t_1^{\delta_1}\dots t_N^{\delta_N}$ in
\[
\prod_{i=1}^{s}\frac{1}{1- (\sum_{j=1}^N z_{i,j}t_j) }=
\prod_{i=1}^{s}\left(1+\left(\sum_{j=1}^N z_{i,j}t_j\right)+
\left(\sum_{j=1}^N z_{i,j}t_j\right)^2+\cdots\right).
\]

In the following proposition,
we study the algebraic degree of multi-projective varieties defined
by rank deficiency of generic polynomial matrices.
Our main tool is the Thom-Porteous formula \cite[Theorem~14.4]{Ful},
which considers the degeneracy locus of homomorphisms between vector bundles.
We refer to \cite{Ful} for more details.

\begin{proposition}
\label{pr:matdeg}
Let $d_1\le\dots\le d_s\in\N^N$ be vectors of positive integers,
and let $M$ be an $ r\times s$-matrix whose entries $m_{i,j}$ are generic multi-homogeneous polynomials in $\cpx[\tdx_1,\dots,\tdx_N]$
with multi-degrees $d_j$.
For the given $l\in\N^N$ and $\nu=(n_1\ddd n_N)$,
the polynomials $\mc{B}_{\nu-l}$ and $\mc{S}_{\nu-l}$
are given in (\ref{eq:mcE}) and (\ref{eq:mcS}) respectively.
Consider the multi-projective variety
$$\mc{X}_{p} \,\coloneqq\, \{\tilde{x}\in\Pj^{n_1}\times\dots\times\Pj^{n_N}:\rank\,M(\tilde{x})\le p\}.$$

{\rm(i)} If $r\le s$, then $\dim \mc{X}_{r-1}=n-s+r-1$
and for each $l\in[n-s+r-1]^{(\nu)}$,
\[(\deg \mc{X}_{r-1})_l=\mc{B}_{\nu-l}(d_1,\dots,d_s).\]

{\rm(ii)} If $r> s$, then $\dim \mc{X}_{s-1}=n-r+s-1$
and for each $l\in[n-r+s-1]^{(\nu)}$,
\[(\deg \mc{X}_{s-1})_l=\mc{S}_{\nu-l}(d_1,\dots,d_s).\]

\end{proposition}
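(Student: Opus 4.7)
The plan is to realize the matrix $M$ as a morphism of vector bundles on $\Pj^{\nu}$ and apply the Thom--Porteous formula \cite[Theorem~14.4]{Ful}. Since the $(i,j)$-entry of $M$ is multi-homogeneous of multi-degree $d_j$, the matrix defines a bundle map
\[
\phi \colon E \coloneqq \bigoplus_{j=1}^{s} \mc{O}_{\Pj^{\nu}}(-d_j) \;\longrightarrow\; F \coloneqq \mc{O}_{\Pj^{\nu}}^{\oplus r},
\]
and $\mc{X}_p$ coincides with the $p$-th degeneracy locus $D_p(\phi)=\{\tdx:\rank\phi(\tdx)\le p\}$. Before applying the formula, I would verify via Proposition~\ref{pr:bertini} applied to the $(p+1)\times(p+1)$ minors of $M$ that for generic entries $D_p(\phi)$ has the expected codimension $(r-p)(s-p)$. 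Substituting $p=r-1$ in case~(i) yields codimension $s-r+1$ and hence dimension $n-s+r-1$, while $p=s-1$ in case~(ii) yields codimension $r-s+1$ and dimension $n-r+s-1$, matching the statement.

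Next I would compute the total Chern class of $F-E$. Let $h_k$ denote the pullback of the hyperplane class of $\Pj^{n_k}$ to $\Pj^{\nu}$, and set $y_j\coloneqq d_{j,1}h_1+\cdots+d_{j,N}h_N$, so that $c(\mc{O}(-d_j))=1-y_j$ and $c(F)=1$. Then
\[
c(F-E) \;=\; \prod_{j=1}^{s} \frac{1}{1-y_j},
\]
whose degree-$m$ graded piece equals the complete homogeneous symmetric polynomial $h_m(y_1,\dots,y_s)$. The Thom--Porteous formula gives $[D_p(\phi)]=\Delta_{\lambda^{(p)}}\bigl(c(F-E)\bigr)$ with rectangular partition $\lambda^{(p)}=\bigl((r-p)^{s-p}\bigr)$, where $\Delta_\lambda(c)=\det(c_{\lambda_i+j-i})$. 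In case~(i) we have $\lambda^{(r-1)}=(1^{s-r+1})$, and the Jacobi--Trudi identity $\det(h_{1-i+j})_{i,j=1}^{m}=e_m$ gives $\Delta_{(1^{s-r+1})}\bigl(c(F-E)\bigr)=e_{s-r+1}(y_1,\dots,y_s)$. In case~(ii) we have $\lambda^{(s-1)}=(r-s+1)$, a single row, so $\Delta_{(r-s+1)}\bigl(c(F-E)\bigr)=c_{r-s+1}(F-E)=h_{r-s+1}(y_1,\dots,y_s)$ directly.

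To finish, I would read off the algebraic degree from the class. By definition, $(\deg \mc{X}_p)_l$ is the intersection number of $\mc{X}_p$ with $l_i$ generic hyperplanes in each $\Pj^{n_i}$, which in the Chow ring of $\Pj^{\nu}$ is the coefficient of the point class $h_1^{n_1}\cdots h_N^{n_N}$ in $[\mc{X}_p]\cdot h_1^{l_1}\cdots h_N^{l_N}$, equivalently the coefficient of $h^{\nu-l}\coloneqq h_1^{n_1-l_1}\cdots h_N^{n_N-l_N}$ in $[\mc{X}_p]$. Under the substitution $t_k\leftrightarrow h_k$ and $z_{i,k}\leftrightarrow d_{i,k}$, the coefficient of $h^{\nu-l}$ in $e_{s-r+1}(y_1,\dots,y_s)=\sum_{|S|=s-r+1}\prod_{j\in S}y_j$ is exactly $\mc{B}_{\nu-l}(d_1,\dots,d_s)$ by the definition \reff{eq:mcE}, and the coefficient in $h_{r-s+1}(y_1,\dots,y_s)$ is $\mc{S}_{\nu-l}(d_1,\dots,d_s)$ by \reff{eq:mcS}.

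The hard part will be lining up the combinatorial conventions: one must choose the correct rectangle $\bigl((r-p)^{s-p}\bigr)$ rather than its transpose in Thom--Porteous, and then recognize $\Delta_{(1^m)}\bigl(c(F-E)\bigr)$ as the elementary symmetric polynomial $e_m(y_1,\dots,y_s)$ via Jacobi--Trudi, rather than as the Segre class $s_m(F-E)$, which differs by signs once $F-E$ is virtual. Small cases like $r=s$, where $\lambda=(1)$ and both formulas reduce to $\sum_j y_j$, and $r=1,s=2$, where $\lambda=(1,1)$ and $c_1^2-c_2=y_1y_2$, serve as useful sanity checks.
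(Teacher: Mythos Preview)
Your proposal is correct and, for part~(i), essentially coincides with the paper's argument: both invoke Thom--Porteous and read off a single Chern class. The difference is in the bundle setup. The paper takes $E=\mc{O}^{\oplus r}$ trivial and $F=\bigoplus_j\mc{O}(d_j)$, so $c(F-E)=c(F)=\prod_j(1+y_j)$ with graded pieces the elementary symmetric polynomials; this makes part~(i) the single-row case $[D_{r-1}]=c_{s-r+1}(F)$, and the paper then \emph{declines} to use Thom--Porteous for~(ii), instead running an induction on $r$ built from an inclusion--exclusion over submatrices and the generating-function identity $\prod_j(1+y_j)\cdot\prod_j(1+y_j)^{-1}=1$ to relate $\mc{B}$ and $\mc{S}$. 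Your dual setup with $E=\bigoplus_j\mc{O}(-d_j)$ and $F$ trivial gives $c(F-E)=\prod_j(1-y_j)^{-1}$ with graded pieces the complete symmetric polynomials, which swaps the roles: part~(ii) becomes the single-row case $[D_{s-1}]=c_{r-s+1}(F-E)=h_{r-s+1}(y)$, while part~(i) needs the Jacobi--Trudi identity $\det(h_{1+j-i})=e_{s-r+1}$. Your route is more uniform and shorter, handling both cases via the same formula; the paper's inductive argument for~(ii) is more hands-on and avoids appealing to Schur-polynomial identities, at the cost of a longer computation.
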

\begin{proof}
Define the direct sum of line bundles
\[E \,\coloneqq\,
\underbrace{\mc{O}_{\mathbb{P}^{\nu}}\oplus\dots\oplus\mc{O}_{\mathbb{P}^{\nu}}
}_{r\ \mbox{\small times}}, \quad
F \,\coloneqq\, {\mathcal O}_{\mathbb{P}^{\nu}}(d_1)\oplus {\mathcal O}_{\mathbb{P}^{\nu}}(d_2)\oplus \ldots\oplus {\mathcal O}_{\mathbb{P}^{\nu}}(d_s).
\]
In the above, the $\mathcal{O}_{\mathbb{P}^{\nu}}(d)$
is the line bundle of regular functions whose global sections
are multi-homogeneous polynomials of multi-degree $d$ in $\tilde{x}$ on $\mathbb{P}^{\nu}$,
and $\mathcal{O}_{\mathbb{P}^{\nu}}\deq \mathcal{O}_{\mathbb{P}^{\nu}}(\bf{0})$,
where $\bf{0}$ is the zero vector.
Then, for a given map $\rho: {\mathcal O}_{\mathbb{P}^{\nu}}\to {\mathcal O}_{\mathbb{P}^{\nu}}(d_i)$,
there exists a multi-homogeneous polynomial $p$ of multi-degree $d_i$ such that
\[\rho(\tau)=p\cdot \tau \fall p\in\mc{O}_{\mathbb{P}^{\nu}}.\]
So the map $\tilde{x}\to M(\tilde{x})$
defines a homomorphism $\sigma: E\to F$ of vector bundles such that for all $(\tau_1\ddd\tau_r)\in E$,
\[\sigma(\tau_1\ddd\tau_r)=\left(\sum_{i=1}^r m_{i,1}\tau_i,\
\sum_{i=1}^r m_{i,2}\tau_i \ddd\ \sum_{i=1}^r m_{i,s}\tau_i \right).\]
The $\mc{X}_{p}$ is equivalent to the degeneracy locus
$\{x\in\mathbb{P}^{\nu}:
\rank\,\sigma(\tilde {x})\le p\}$,
whose dimension is $n-(r-p)(s-p)$ from the genericity of entries of $M$.
Let \[\begin{array}{ll}
c(F)= (1+d_{1,1}t_1+..+d_{1,N}t_N)\cdot ...\cdot(1+d_{s,1}t_1+..+d_{s,N}t_N).
\end{array}\]
Denote by $c_i$ the sum of terms in the above product whose total degree in $t$ equals $i$.
Then $c(F)=c_0+c_1+\dots+ c_s$.

(i) Assume $r\leq s$. By Thom-Porteous formula (see Theorem~14.4 and Example~14.4.1 in \cite{Ful}),
the $l$-degree of $\mc{X}_{r-1}$ is the coefficient of $t^{\nu}$ in the product $t^l\cdot c_{s-r+1}(F)$,
where $c_{s-r+1}(F)$ is the sum of all terms in $c(F)$
whose total degree in $t$ equals $s-r+1$. Therefore,
$(\deg \mc{X})_l$ equals the coefficient of $t^{\nu-l}$ in $c_{s-r+1}(F)$,
which coincides with $\mc{B}_{\nu-l}(d_1,\dots,d_s)$.

(ii) When $r>s$ and $p=s-1$, instead of using the Thom-Porteous formula,
we assume (i) and proceed by induction on $k>s.$
Assume there exists a positive integer $k\ge s+1$
such that the formula holds for all $r<k$.
Then, let $M_i$ be the submatrix of $M$ consisting of the first $r-i$ rows,
and $N_j$ be the submatrix consisting of the $r-s$ to $r-j$ rows.
Let $X_i$ be the variety where the matrix $M_i$ has rank less than $s$,
and $Y_j$ be the variety such that $\rank\,N_j\leq s-j$.
For each $i$, $\dim X_i=n-r+i+s-1$, and $\dim Y_{i-1}=n-i$.
Since the intersection $X_i\cap Y_{i-1}$ is transversal,
\[ \dim X_i\cap Y_{i-1} \, = \, n-r+s-1 .\]
Besides that,
for $r_i,s_i\in\N^N$ such that $|r_i|=n-r+i+s-1$ and $|s_i|=n-i$,
\[(\deg X_i)_{r_i}=\mc{S}_{\nu-r_i}(d_1,\dots,d_s),\quad (\deg Y_{i-1})_{s_i}=\mc{B}_{\nu-s_i}(d_1,\dots,d_s).\]
The first equation is obtained by induction, and the second equation follows (i).
By Lemma~\ref{lm:interdeg},
\[\deg (X_i\cap Y_{i-1})_l=\sum_{r_i+s_i=l+\nu}\mc{S}_{\nu-r_i}(d_1,\dots,d_f)\cdot \mc{B}_{\nu-s_i}(d_1,\dots,d_s).\]
Note that
\[\mc{X}_{s-1}=X_1\cap Y_{0}\setminus(X_2\cap Y_{1}\setminus(X_3\cap Y_{2}\setminus\dots\setminus (X_s\cap Y_{s-1})\cdots)).\]
Thus we have
\[(\deg \mc{X}_{s-1})_{l}=\sum_{i=1}^s(-1)^{i-1}\sum_{r_i+s_i=l+\nu}\mc{S}_{\nu-r_i}(d_1,\dots,d_s)\cdot \mc{B}_{\nu-s_i}(d_1,\dots,d_s).\]
Consider the identity
\begin{align*}
1&=\prod_{i=1}^s(1+t_1d_{i,1}+\dots+t_Nd_{i,N})/\prod_{i=1}^s(1+t_1d_{i,1}+\dots+t_Nd_{i,N})\\
 &=\prod\limits_{i=1}^s(1+t_1d_{i,1}+\dots+t_Nd_{i,N})(1-(\sum\limits_{j=1}^N d_{i,j}t_j)+(\sum\limits_{j=1}^N d_{i,j}t_j)^2-\cdots)\\
 &=(1+\sum\limits_{\delta}\mc{B}_{\delta}(d_1,\dots,d_s)t^{\delta})(1+\sum\limits_{\delta}(-1)^{|\delta|}\mc{S}_{\delta}(d_1,\dots,d_s)t^{\delta}).
\end{align*}
By comparing the coefficient of $t^{\nu-l}$, we get
\[\mc{S}_{\nu-l}(d_1,\dots,d_s)=\sum_{i=1}^s(-1)^{i-1}
\sum_{r_i+s_i=l+\nu}\mc{S}_{\nu-r_i}(d_1,\dots,d_s)\cdot
\mc{B}_{\nu-s_i}(d_1,\dots,d_s),\]
which implies that
\[\deg(\mc{X}_{s-1})_l=\mc{S}_{\nu-l}(d_1,\dots,d_s).\]
\end{proof}

\section{Algebraic degrees of GNEPs}
\label{sc:degree}

For a GNEP given by generic polynomials,
its algebraic degree counts the number of complex solutions
to the Fritz-John system \reff{eq:FJwithLM}.
This section gives formulae for algebraic degrees of GNEPs.

\subsection{The case of given active constraints}

In this subsection, we consider the Fritz-John variety for the case that the active label sets $ E_i$ are
\be \label{act:Ki}
 E_i \, = \, \{1, \ldots, m_i \}, \quad i = 1,\ldots, N.
\ee
This is the case when all constraints are given by equalities.
Assume the polynomials $f_i$ and $g_{i,j}$ for the GNEP
are generic for the given degrees.
Recall the multi-projective variety $\wtW$ as in \reff{var:tdW} is zero-dimensional,
by Theorem~\ref{tm:genfinite}. Note that
\[
\wtW\setminus\Pj (\mc{W}) =
\left\{ x\in \wtW:  x_{i,0}= 0\ \mbox{for some}\ i\in[N] \right\}.
\]
For each $i$, the intersection $\wtW\cap\{x\in\mathbb{P}^{\nu}:x_{i,0}=0\}=\emptyset$
for each $i\in[N]$.
This is proved in Theorem~\ref{tm:GNEPdegree}.
Therefore, $\Pj (\mc{W})=\wtW$ and hence $|\mc{W}|=|\wtW|$.

Let $m \,\coloneqq\, m_1+\dots+m_N$,
which is the total number of all active constraints.
Recall that $\nu =(n_1\ddd n_N)$.
For convenience, we denote that
\[
N\cdot \nu \, = \, (N\cdot n_1\ddd N\cdot n_N).
\]
Let $\widetilde{\mc{V}} \,\coloneqq\,
\widetilde{\mc{V}}_1\cup\ldots\cup \widetilde{\mc{V}}_N$,
where $\widetilde{\mc{V}_i}$
is the multi-projective determinantal variety given in (\ref{eq:mcVi}).
Note that $\wtW=\wtU\cap\wtV$. Recall the set of labels
$[s]^{\nu}$ is given in (\ref{eq:[s]}) for nonnegative integers $s$.

\begin{theorem}
\label{tm:GNEPdegree}
For each $i,j$,
let
\[
\hat{d}_{i,j} \,\coloneqq\, \Big(
d_{i,j,1},\dots,d_{i,j,i-1},
\max(d_{i,j,i}-1,0),
d_{i,j,i+1},\dots,d_{i,j,N} \Big).
\]
Let $\nu \,\coloneqq\, (n_1,\dots,n_N)$ and
\be\label{eq:Omega}
\Omega \,\coloneqq\,
\left\{\big(l^{(0)},\dots,l^{(N)}\big)
\left|\begin{array}{c}
|l^{(0)}|\in [n-m]^{(\nu)}, \\
|l^{(i)}|=[n-n_i+m_i]^{(\nu)},\, i\in[N] \\
l^{(0)}+\dots+l^{(N)}=N\cdot\nu
\end{array}\right.\right\}.
\ee
Assume the active label sets are as in \reff{act:Ki},
each $f_i$ is generic of the multi-degree $d_{i,0}$
and each $g_{i,j}$ is generic of the multi-degree $d_{i,j}$.
Then, the number of complex Fritz-John points
is given by the following sum
\be\label{eq:GNEPdeg}
\sum_{(l^{(0)},\dots,l^{(N)})\in\Omega}
\mc{A}_{\nu-l^{(0)}}(d_{1,1},\dots,d_{1,m_i},\dots,d_{N,m_N})\cdot
\prod_{i=1}^N \mc{S}_{\nu-l^{(i)}}(\hat{d}_{i,0},\dots,\hat{d}_{i,m_i}).
\ee
In the above, $\mc{A}_{\nu-l^{(0)}}$ is given as in \reff{eq:mcA}
and $\mc{S}_{\nu-l^{(i)}}$ is given as in \reff{eq:mcS}.
\end{theorem}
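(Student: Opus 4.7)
The plan is to express $|\mc{W}|$ as the algebraic degree of the zero-dimensional multi-projective variety $\wtW = \wtU \cap \wtV_1 \cap \cdots \cap \wtV_N$ and then evaluate that degree by combining the algebraic degrees of the individual factors via the multi-projective B\'ezout formula supplied by Lemma~\ref{lm:interdeg}. The first ingredient is $\deg \wtU$: since $\wtU$ is a complete intersection cut out by $m = m_1 + \cdots + m_N$ generic multi-homogeneous polynomials $\td{g}_{i,j}$ of multi-degrees $d_{i,j}$, Lemma~\ref{lm:completedeg}(ii) yields $\dim \wtU = n - m$ and $(\deg \wtU)_{l^{(0)}} = \mc{A}_{\nu - l^{(0)}}(d_{1,1}, \ldots, d_{N,m_N})$ for every $l^{(0)} \in [n - m]^{(\nu)}$. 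The second ingredient is $\deg \wtV_i$: applying Proposition~\ref{pr:matdeg}(ii) to the homogenized Jacobian $\wt{J}_i$, which is an $n_i \times (m_i+1)$ matrix whose $j$th column has multi-degree $\hat{d}_{i,j}$ (the shift by $-e_i$ comes from differentiation with respect to $x_{i,k}$), I get $\dim \wtV_i = n - n_i + m_i$ and $(\deg \wtV_i)_{l^{(i)}} = \mc{S}_{\nu - l^{(i)}}(\hat{d}_{i,0}, \ldots, \hat{d}_{i,m_i})$ for each $l^{(i)} \in [n - n_i + m_i]^{(\nu)}$.

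The main technical hurdle will be verifying that, for generic $f_i$ and $g_{i,j}$, the $N+1$ varieties $\wtU, \wtV_1, \ldots, \wtV_N$ intersect transversally in $\Pj^{\nu}$. The difficulty is that each $\wtV_i$ is singular along the deeper rank-deficiency locus $\wtV_i^{\circ}$, so Proposition~\ref{pr:bertini} cannot be applied to $\wtV_i$ directly. To circumvent this I would reuse the strategy developed in the proof of Theorem~\ref{tm:genfinite}(i): instead of working with $\wtV_i$ itself, I would work with the complete intersection $\mc{C}_i \subseteq \Pj^{\nu} \times \Pj^{m_i}$ introduced in \reff{eq:mcC}, whose projection onto $\Pj^{\nu}$ is $\wtW_i = \wtU_i \cap \wtV_i$. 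Since $\wtW_i^{\circ}$ is a proper closed subset of every irreducible component of $\wtW_i$, an inductive application of Proposition~\ref{pr:bertini} --- varying the coefficients of one player's data at a time --- shows that each successive partial intersection $\wtU \cap \wtV_1 \cap \cdots \cap \wtV_k$ has the expected pure codimension and meets the next factor transversally outside the singular locus.

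With transversality in hand, iterated application of Lemma~\ref{lm:interdeg} gives
\[
|\wtW| \, = \, \sum_{(l^{(0)}, \ldots, l^{(N)}) \in \Omega} (\deg \wtU)_{l^{(0)}} \cdot \prod_{i=1}^N (\deg \wtV_i)_{l^{(i)}},
\]
where the balance condition $l^{(0)} + \cdots + l^{(N)} = N \cdot \nu$ in the definition of $\Omega$ emerges naturally from telescoping the pairwise labels at each successive intersection, and the size constraints on each $l^{(i)}$ match the dimensions computed in the first paragraph. Substituting the explicit formulas for $(\deg \wtU)_{l^{(0)}}$ and $(\deg \wtV_i)_{l^{(i)}}$ recovers exactly \reff{eq:GNEPdeg}. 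The last step is to pass from $|\wtW|$ to $|\mc{W}|$, i.e., to rule out Fritz-John points at infinity. I would verify that $\wtW \cap \{\td{x}_{i,0} = 0\} = \emptyset$ for every $i \in [N]$ by running one additional round of Proposition~\ref{pr:bertini} on the restriction of the Fritz-John system to each coordinate hyperplane $\{x_{i,0} = 0\}$; the resulting codimension count rules out any nonempty intersection for generic data, so $\Pj(\mc{W}) = \wtW$ and $|\mc{W}|$ equals the sum in \reff{eq:GNEPdeg}.
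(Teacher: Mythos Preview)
Your proposal is correct and follows essentially the same route as the paper: compute $(\deg\wtU)_{l^{(0)}}$ via Lemma~\ref{lm:completedeg}, compute $(\deg\wtV_i)_{l^{(i)}}$ via Proposition~\ref{pr:matdeg}(ii), establish transversality of $\wtU,\wtV_1,\ldots,\wtV_N$ by lifting to the Lagrange-multiplier variety $\mc{C}$ and invoking Proposition~\ref{pr:bertini}, then iterate Lemma~\ref{lm:interdeg} and rule out points at infinity. The only cosmetic difference is that the paper packages the transversality argument with a single incidence variety $\mc{C}\subseteq\Pj^{\nu}\times\Pj^{m_1}\times\cdots\times\Pj^{m_N}$ rather than the individual $\mc{C}_i$'s, but the content is the same.
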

\begin{proof}
Note that the intersection $\wtW=\cap_{i=1}^N\wtW_i$ is zero-dimensional.
We consider that each $m_i \le n_i$,  because otherwise
the Fritz-John variety is empty when $g_{i,j}$ are generic.
Let $\lambda_i\deq(\lambda_{i,0}\ddd\lmd_{i,m_i})\in \cpx^{m_i+1}$ be variables.
Consider the multi-projective variety $\mc{C}$ given by
\[g_{1,1}(\tdx)=g_{1,2}(\tdx)=\dots=g_{N,m_N}(\tdx)=0,\]
\[ \wt{J}_1(\tdx)\cdot \lmd_1=0\ddd\wt{J}_N(\tdx)\cdot \lmd_N=0.\]
The $\mc{C}$ is defined by vanishing $n+m$ multi-homogeneous polynomials.
So the tuple $(\tdx,\lmd_1\ddd\lmd_N)\in\mc{C}$ if and only if
$x\in \wtW$ and each $\lmd_i$ solves $\wt{J}_i(\tdx)\cdot \lmd_i=0$.
When all $f_i$ and $g_{i,j}$ are generic,
if $\tdx\in\wtW$,
then the $\wt{J}_i(\tdx)\cdot \lmd_i=0$ has a unique solution for all $i$.
This is because the columns of $\wt{J}_i^{\circ}(\tdx)$
are linearly independent if $x\in\wtW$,
by Theorem~\ref{tm:genfinite}.
Therefore, $\dim \mc{C}=0$ and $\mc{C}$ is a complete intersection of hypersurfaces.
We note that for the given constraining polynomials $g_{1,1},g_{1,2}\ddd g_{N,m_N}$,
these hypersurfaces intersect without any fixed point
when we vary coefficients of $f_1\ddd f_N$.
By Proposition~\ref{pr:bertini}, these hypersurfaces intersect transversely,
and $\mc{C}$ is smooth.
Let $\pi$ be the projection that maps $(\tdx,\lmd_1\ddd\lmd_N)$ to $\tdx$.
Since the fibers are linear, the finite set of points in $\mc{C}$ is mapped bijectively to  $\mc{U}\cap\mc{V}_1\cap\dots\cap\mc{V}_N$.
In particular, $\mc{U},\mc{V}_1\ddd\mc{V}_N$ intersect transversely
and we may evaluate $\deg \wtW$ using Lemma~\ref{lm:interdeg}.
Moreover, $\mc{C}\cap\{x\in\mathbb{P}^{\nu}:x_{i,0}=0\}=\emptyset$.
To see this, we assume that $i=1$, without loss of generality.
Then, the $\mc{C}\cap \{x\in\mathbb{P}^{\nu}:x_{1,0}=0\}$
is the intersection of hypersurfaces in
$\Pj^{n_1-1}\times \Pj^{n_2}\times\dots\times\Pj^{n_N}$
that are defined by coefficients in $x$ of
\[
f^{hom}_i(x) \coloneqq  \td{f}_i(\hat{x}),\quad
g^{hom}_{i,1}(\hat{x}) \coloneqq \td{g}_{i,1}(\hat{x}),  \, \ldots,  \,
g^{hom}_{i,m_i}(x) \coloneqq \td{g}_{i,m_i}(\hat{x}),
\]
where $\hat{x}\deq(0,x_{1,1}\ddd x_{1,n_1},x_{2,0},x_{2,1}\ddd x_{N,n_N})$.
Then, the emptiness of $\mc{C}\cap \{x\in\mathbb{P}^{\nu}:x_{1,0}=0\}$
follows from Proposition~\ref{pr:bertini} and Theorem~\ref{tm:genfinite}.
Therefore, the $\wtW$, as the image of $\mc{C}$ under the projection
which maps $(\tdx,\lmd_1\ddd\lmd_N)$ to $\tdx$,
is disjoint with $\{x\in\mathbb{P}^{\nu}:x_{1,0}=0\}$.

By Proposition~\ref{pr:matdeg}, for a vector $l\in[n-n_i+m_i]^{(\nu)}$, we have
\[
(\deg \mc{V}_i)_l = \mc{S}_{\nu-l}(\hat{d}_{i,0},\dots,\hat{d}_{i,m_i}).
\]
Consider the intersection
\[\widetilde{\mc{V}}=\widetilde{\mc{V}}_1\cap\widetilde{\mc{V}}_2\cap \dots\cap \widetilde{\mc{V}}_N=(\dots((\widetilde{\mc{V}}_1\cap\widetilde{\mc{V}}_2)\cap
\widetilde{\mc{V}}_3)\cap \dots\cap \widetilde{\mc{V}}_N).\]
This intersection is transversal
and $\dim \widetilde{\mc{V}}=m-n.$

Now we show (\ref{eq:GNEPdeg}) by induction. For each $i\in[N]$,
denote $s_i\deq n-n_i+m_i$.
Suppose
\be\label{eq:inductionas}
(\deg \widetilde{\mc{V}}_1\cap\widetilde{\mc{V}}_2
\cap \dots\cap \widetilde{\mc{V}}_k)_l
=\sum_{\substack{l^{(i)}\in[s_i]^{(\nu)}\,(i\in[k])\\l^{(1)}+\dots+l^{(k)}=(k-1)\cdot\nu+l}}
\prod_{i=1}^k\mc{S}_{\nu-l^{(i)}}(\hat{d}_{i,0},\dots,\hat{d}_{i,m_i})
\ee
holds for some $k\le N$ and for each $l\in[n-\sum_{i=1}^k(n_i-m_i)]^{(\nu)}$.
Then, by Lemma~\ref{lm:interdeg},
\[
 (\deg \widetilde{\mc{V}}_1\cap\widetilde{\mc{V}}_2\cap \dots
 \cap \widetilde{\mc{V}}_{k+1})_l =
\]
\[
\sum\limits_{\substack{l^{(k+1)}\in[s_{i+1}]^{(\nu)}\\ l^{(k+1)}\ge l\\}}
(\deg \widetilde{\mc{V}}_1\cap\widetilde{\mc{V}}_2\cap \dots\cap \widetilde{\mc{V}}_k)_{\nu+l-l^{(k+1)}}\cdot
\mc{S}_{\nu-l^{(k+1)}}(\hat{d}_{i,0},\dots,\hat{d}_{i,m_i}).
\]
For the right-hand side, we also have
\begin{align*}
&\sum\limits_{\substack{l^{(k+1)}\in[s_{i+1}]^{(\nu)}\\ l^{(k+1)}\ge l\\}}
(\deg \widetilde{\mc{V}}_1\cap\widetilde{\mc{V}}_2\cap \dots\cap \widetilde{\mc{V}}_k)_{\nu+l-l^{(k+1)}}\cdot\mc{S}_{\nu-l^{(k+1)}}(\hat{d}_{i,0},\dots,\hat{d}_{i,m_i})\\
=\quad&\sum_{\substack{l^{(k+1)}\in[s_{i+1}]^{(\nu)}\\ l^{(k+1)}\ge l\\}}\mc{S}_{\nu-l^{(k+1)}}(\hat{d}_{i,0},\dots,\hat{d}_{i,m_i})\sum_{(l^{(1)},\dots,l^{(k)})\in\hat{\Omega}}
\prod_{i=1}^k\mc{S}_{\nu-l^{(i)}}(\hat{d}_{i,0},\dots,\hat{d}_{i,m_i})\\
=\quad&\sum_{\substack{l^{(i)}=[s_i]^{(\nu)}\, (i\in[k+1])\\l^{(1)}+\dots+l^{(k+1)}=k\cdot\nu+l}}\prod_{i=1}^{k+1}\mc{S}_{\nu-l^{(i)}}(\hat{d}_{i,0},\dots,\hat{d}_{i,m_i}).
\end{align*}
In the above, the \[\hat{\Omega}\deq
\left\{\big(l^{(1)},\dots,l^{(k)}\big)
\left|\begin{array}{c}
l^{(i)}=[n-n_i+m_i]^{(\nu)},\, i\in[k] \\
l^{(1)}+\dots+l^{(k)}=k\cdot\nu+l-l^{(k+1)}
\end{array}\right.\right\}.\]
Clearly, \reff{eq:inductionas} holds when $k=1$,
by Proposition~\ref{pr:matdeg}.
Therefore, we have
\[
(\deg \wtV)_l \,\, =\sum_{\substack{l^{(i)}\in[s_i]^{(\nu)}\, (i\in[N])\\l^{(1)}+\dots+l^{(N)}=(N-1)\cdot\nu+l}}
\prod_{i=1}^N\mc{S}_{\nu-l^{(i)}}(\hat{d}_{i,0},\dots,\hat{d}_{i,m_i})
\]
for all $l=[m]^{(\nu)}$, by induction.
Note that the $\wtU$ is given by vanishing $\td{g}_{i,j}$ for all $i\in[N]$ and $j\in[m_i]$,
and all $g_{i,j}$ are generic.
For every $l\in[n-m]^{(\nu)}$,
we have
\[(\deg{\wtU})_l=\mc{A}_{\nu-l}(d_{1,1},\dots,d_{1,m_i},\dots,d_{N,m_N}).\]
Since $\dim \widetilde{\mc{V}}\cap\wtU=0$, by Lemma~\ref{lm:interdeg},
the degree of $\widetilde{\mc{V}}\cap\wtU$ is
\[\begin{gathered}
\sum_{l^{(0)\in[n-m]^{(\nu)}}}(\deg \wtU)_{l^{(0)}}\cdot(\deg \widetilde{\mc{V}})_{\nu-l^{(0)}} = \\ 
\sum_{(l^{(0)},\dots,l^{(N)})\in\Omega}
\mc{A}_{\nu-l^{(0)}}(d_{1,1},\dots,d_{1,m_i},\dots,d_{N,m_N})
\cdot\prod_{i=1}^N\mc{S}_{\nu-l^{(i)}}(\hat{d}_{i,0},\dots,\hat{d}_{i,m_i}).
\end{gathered}
\]
\end{proof}

\subsection{The case without given active constraints}

To count the number of all Fritz-John points,
we need to consider all possibilities of active label sets.
For the $i$th player's optimization problem,
let $ E_i$ be the active label set.
Then, it is clear that
$ \mc{E}_{i,1}\subseteq E_i\subseteq\mc{E}_{i,1}\cup\mc{E}_{i,2} $.
By Theorem~\ref{tm:GNEPdegree},
if we enumerate $ E_i$ by $[m_i]$ for each $i$,
and let $E \,\coloneqq\,(E_1\ddd  E_N)$,
then the algebraic degree for the Fritz-John variety with
the tuple of active labeling sets $E$
is given by (\ref{eq:GNEPdeg}).
Denote the set of all possible active sets
\be\label{eq:Lambda}
\Lambda \coloneqq \left\{(E_1\ddd E_N)\left|\begin{array}{c}
\mc{E}_{i,1}\subseteq E_i\subseteq\mc{E}_{i,1}\cup\mc{E}_{i,2}\ (i\in[N]),\\
|E_i|\le n_i
\end{array}\right. \right\}. \ee
Then, $\Lambda$ is the set of all possible active label sets
for the GNEP when every $g_{i,j}$ is generic.

\begin{theorem}
\label{tm:GNEPdegreeinactive}
For each $i\in[N]$ and $j\in\mc{E}_{i,1}\cup\mc{E}_{i,2}$, let
$d_{i,j} \,\coloneqq\, (d_{i,j,1},\dots,d_{i,j,N})$
be a tuple of degrees, and let
\[
\hat{d}_{i,j} \, \coloneqq  \, \Big(d_{i,j,1}, \,\ldots, \, d_{i,j,i-1}, \,
\max(d_{i,j,i}-1,0), \, d_{i,j,i+1}, \, \ldots, \, d_{i,j,N} \Big).
\]
Assume each $f_i$ is generic of the multi-degree $d_{i,0}$
and each $g_{i,j}$ is generic of the multi-degree $d_{i,j}$.
Let $\Lambda$ be the set of active label sets given in (\ref{eq:Lambda}).
For each $E\deq(E_1\ddd E_N)\in\Lambda$,
denote by $\mc{D}_E$ the algebraic degree given in (\ref{eq:GNEPdeg}) with every $ E_i$ enumerated as $[m_i]$.
Then, the total number of all complex Fritz-John points is 
\be\label{eq:GNEPdeginactive}
\sum_{I\in\Lambda}\mc{D}_E.
\ee
\end{theorem}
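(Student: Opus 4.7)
The plan is to partition the set of all complex Fritz-John points of the GNEP according to their tuple of active label sets, apply Theorem~\ref{tm:GNEPdegree} to each piece, and then sum the contributions. For each complex Fritz-John point $x$, define its active tuple $E(x) = (E_1, \ldots, E_N)$ by $E_i = \{j \in \mathcal{E}_{i,1} \cup \mathcal{E}_{i,2} : g_{i,j}(x) = 0\}$. By the form of \reff{eq:FJwithLM} we have $\mathcal{E}_{i,1} \subseteq E_i$, and, as noted in the proof of Theorem~\ref{tm:genfinite}, for generic $g_{i,j}$ the system $g_{i,j}(x) = 0$ for $j \in E_i$ is inconsistent whenever $|E_i| > n_i$; hence $|E_i| \le n_i$ and $E(x) \in \Lambda$.

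Next, for each fixed $E \in \Lambda$, let $\mathcal{W}^{(E)}$ denote the set of complex solutions of the Fritz-John system \reff{eq:FJwithLM} associated with the prescribed active indices $E$. After relabeling each $E_i$ as $\{1, \ldots, |E_i|\}$, Theorem~\ref{tm:GNEPdegree} gives $|\mathcal{W}^{(E)}| = \mathcal{D}_E$ for generic $f_i, g_{i,j}$. A priori, $\mathcal{W}^{(E)}$ counts every point satisfying the equations indexed by $E$ together with the rank condition, and may in principle include points where some $g_{i,j'}$ with $j' \in \mathcal{E}_{i,2} \setminus E_i$ also vanishes.

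The main step I would then take is to rule out such accidental vanishings, by a Bertini-type argument via Proposition~\ref{pr:bertini}. Let $\widetilde{\mathcal{W}}^{(E)}$ be the homogenization of $\mathcal{W}^{(E)}$; by Theorem~\ref{tm:GNEPdegree} it is a zero-dimensional multi-projective variety whose defining polynomials involve only the coefficients of $f_i$ and of $g_{i,j}$ for $j \in E_i$. For each $j' \in \mathcal{E}_{i,2} \setminus E_i$, the hypersurface $\{\tilde{g}_{i,j'} = 0\} \subseteq \mathbb{P}^\nu$ is parameterized by coefficients that can be varied independently of $\widetilde{\mathcal{W}}^{(E)}$, and the base locus of such hypersurfaces is empty, so every irreducible component of $\widetilde{\mathcal{W}}^{(E)}$ avoids it. Proposition~\ref{pr:bertini} then implies that $\widetilde{\mathcal{W}}^{(E)} \cap \{\tilde{g}_{i,j'} = 0\}$ has dimension $-1$, i.e., is empty, for generic $g_{i,j'}$. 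Consequently every point of $\mathcal{W}^{(E)}$ has active set exactly $E$, and the subsets $\{\mathcal{W}^{(E)}\}_{E \in \Lambda}$ are pairwise disjoint.

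Since each complex Fritz-John point $x$ of the GNEP belongs to the unique $\mathcal{W}^{(E(x))}$ with $E(x) \in \Lambda$, the total number of complex Fritz-John points equals $\sum_{E \in \Lambda} |\mathcal{W}^{(E)}| = \sum_{E \in \Lambda} \mathcal{D}_E$, which is \reff{eq:GNEPdeginactive}. The delicate point in this plan is the Bertini-type genericity step guaranteeing that inactive constraints cannot accidentally vanish at the counted solutions; once that is in place, the rest is straightforward bookkeeping on top of Theorem~\ref{tm:GNEPdegree}.
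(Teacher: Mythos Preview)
Your approach is essentially the same as the paper's: partition the complex Fritz--John points by their active-label tuple, apply Theorem~\ref{tm:GNEPdegree} on each piece, and show the pieces are disjoint by arguing that an ``inactive'' constraint $g_{i,j'}$ cannot accidentally vanish on the finite set $\mathcal{W}^{(E)}$ when its coefficients are generic. The paper carries out this last step more directly than you do: rather than invoke Proposition~\ref{pr:bertini}, it simply observes that $\mathcal{W}^{(E)}$ is finite and determined by data independent of $g_{i,j'}$, so requiring $\tilde g_{i,j'}$ to vanish at each of these finitely many points cuts out a proper closed subset of the coefficient space of $g_{i,j'}$. The content is the same.

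One small imprecision in your first paragraph: you assert that for generic $g_{i,j}$ the system $g_{i,j}(x)=0$, $j\in E_i$, is inconsistent once $|E_i|>n_i$. As stated this is false, since these are $|E_i|$ equations in $n=n_1+\cdots+n_N$ variables. What is true (and what is used in the proof of Theorem~\ref{tm:GNEPdegree}, not Theorem~\ref{tm:genfinite}) is that the \emph{full} Fritz--John system with such an $E$ is generically empty, by a codimension count across all players. Alternatively, your own Bertini step already handles this: any Fritz--John point $x$ with $|E_i(x)|>n_i$ also lies in $\mathcal{W}^{(E')}$ for some $E'\in\Lambda$ with $E'_i\subsetneq E_i(x)$ and $|E'_i|=n_i$ (the rank condition is automatic for an $n_i\times(n_i+1)$ matrix), and then $g_{i,j'}(x)=0$ for $j'\in E_i(x)\setminus E'_i$ contradicts the genericity conclusion you already established.
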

\begin{proof}
For the $E\in\Lambda$,
we denote by $\mc{K}_{E}$ the set of complex Fritz-John points with active constraint labels $E$.
Then, it suffices to show that for arbitrary two distinct $E^{(1)}, E^{(2)}\in\Lambda$,
the $\mc{K}_{ E^{(1)}}\cap \mc{K}_{ E^{(2)}}=\emptyset$.

Since $ E^{(1)}\ne  E^{(2)}$,
there exists $i\in[N]$ and $j\in E^{(2)}_i$ such that $j\notin  E^{(1)}$.
By Theorem~\ref{tm:genfinite},
the $\mc{K}_{ E^{(1)}}$ is a finite set.
Then, the equations
\[\td{g}_{i,j}(\tdu)=0\quad (\tdu\in\mc{K}_{ E^{(1)}})\]
define a proper close subset of the space of coefficients for $g_{i,j}$.
This implies that
\[ \mc{K}_{ E^{(1)}}\cap \{ \tdx\in\Pj^{\nu}: \td{g}_{i,j}(\tdx)=0 \}=\emptyset \]
when the $g_{i,j}$ is generic in $\cpx[x]_{d_{i,j}}$.
Note that all points in $\mc{K}_{ E^{(2)}}$ vanish $g_{i,j}$ since $j\in E^{(2)}$.
We know $\mc{K}_{ E^{(1)}}\cap \mc{K}_{ E^{(2)}}=\emptyset$.
\end{proof}

\subsection{The case of NEPs}

Now we consider the special case that the GNEP is an NEP.
Recall that the GNEP reduces to an NEP if for every $i\in[N]$,
all $g_{i,j}$ are independent of $\xmi$.
For the NEP, the multi-degree for $g_{i,j}(\xpi)$ is $(0\ddd 0, d_{i,j}, 0 \ddd 0 )$,
that the $i$th entry $d_{i,j}$ is the degree of $g_{i,j}(\xpi)$ in the variable of $\xpi$,
and all other entries are zero.
Moreover, if $g_{i,j}(\xpi)$ is generic in $\cpx[\xpi]_d$,
then it is a generic polynomial in $\cpx[x]$ whose multi-degree is $d\cdot e_i$.
For the given nonnegative integer $m$ and $i\in[N]$,
let $z_0\,\coloneqq\,(z_{0,1}\ddd z_{0,N})$ and $z_1\ddd z_{m}$ be variables,
and let $\delta\,\coloneqq\,(\delta_1\ddd \delta_N)$ be degrees,
we define
\be \label{Ti:dt}
\begin{gathered}
\mc{T}^{(i)}_{\delta}(z_0,z_1\ddd z_m)\,\coloneqq\,\\
\sum_{\substack{(\eta_0\ddd \eta_m)\in \N^{m+1}  \\ \eta_0+\ldots+\eta_m=\delta_i}}
z_{0,i}^{\eta_0} z_1^{\eta_1}\ldots z_m^{\eta_m}\cdot(\eta_0+
\sum_{i \ne j\in[N] }\delta_j)!
\prod_{ i \ne j\in[N] }
\frac{ z_{0,j}^{\delta_j}}{\dt_j!} .
\end{gathered}
\ee

\begin{theorem}
\label{tm:NEPdegree}
For each $i,j$, let
$d_{i,0} \,\coloneqq\, (d_{i,0,1},\dots,d_{i,0,N})$
be a tuple of multi-degrees,
and let $d_{i,j}$ be a nonnegative integer.
Denote $\nu \,\coloneqq\, (n_1,\dots,n_N)$.
For each $i\in[N]$ and $j\in\mc{E}_{i,1}\cup\mc{E}_{i,2}$,
suppose $f_i$ is a generic polynomial in $x$ with multi-degree $d_{i,0}$,
and $g_{i,j}$ is a generic polynomial in $\xpi$ with degree $d_{i,j}$.

{\rm(i)} For the given tuple of active label sets $E\,\coloneqq\,(E_1\ddd E_N)$,
if we enumerate each $E_i$ as $[m_i]$, and let
\[
\Theta \,\coloneqq\,
\left\{\big(l^{(1)},\dots,l^{(N)}\big)
\left|\begin{array}{c}
l^{(i)} \in [n-n_i+m_i]^{\nu},\, i=1, \ldots, N, \\
l^{(1)}+\dots+l^{(N)}=(N-1)\cdot\nu + (m_1\ddd m_N)
\end{array}\right.\right\},
\]
then the number of complex Fritz-John points is
\be\label{eq:NEPdegree}
\sum_{(l^{(1)},\dots,l^{(N)})\in \Theta }
\prod_{i=1}^N d_{i,1}\cdot \ldots\cdot d_{i,m_i} \cdot
\mc{T}_{\nu-l^{(i)}}^{(i)}(d_{i,0}-e_i, d_{i,1}-1\ddd d_{i,m_i}-1).
\ee

{\rm(ii)} Under the same settings as in (i),
denote by $\mc{D}_{E}$ the algebraic degree given in (\ref{eq:NEPdegree}),
and let $\Lambda$ be the set of all possible active tuples as in (\ref{eq:Lambda}).
Then, the total number of all complex Fritz-John points is
\[
\sum_{E\in\Lambda}\mc{D}_{E}.
\]

\end{theorem}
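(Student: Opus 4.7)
The plan is to derive both parts of Theorem~\ref{tm:NEPdegree} by specializing Theorems~\ref{tm:GNEPdegree} and~\ref{tm:GNEPdegreeinactive} to the NEP setting. The key observation is that when each $g_{i,j}$ depends only on $\xpi$, its multi-degree as an element of $\cpx[x]$ is $d_{i,j}\cdot e_i$, supported entirely on the $i$-th block. Before invoking the earlier theorems I would check that genericity of $g_{i,j}$ within $\cpx[\xpi]_{d_{i,j}}$ still yields the transversality statements from Section~\ref{sc:finite} and the proof of Theorem~\ref{tm:GNEPdegree}, since a generic polynomial in $\cpx[x]$ of multi-degree $d_{i,j}\cdot e_i$ is exactly a generic element of $\cpx[\xpi]_{d_{i,j}}$, so Proposition~\ref{pr:bertini} applied to this subfamily still suffices.

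Granted this, the $\mc{A}$-factor in formula~\reff{eq:GNEPdeg} collapses dramatically. Since $\mc{A}_\delta(z_1,\ldots,z_k)$ is the coefficient of $t^\delta$ in $\prod_r(z_{r,1}t_1+\dots+z_{r,N}t_N)$, substituting each constraint multi-degree $d_{i,j}\cdot e_i$ reduces the corresponding linear factor to the single monomial $d_{i,j}t_i$. The full product becomes $\big(\prod_{i,j} d_{i,j}\big)\,\prod_i t_i^{m_i}$, so $\mc{A}_{\nu-l^{(0)}}$ vanishes unless $\nu-l^{(0)}=(m_1,\ldots,m_N)$. This pins down $l^{(0)}=(n_1-m_1,\ldots,n_N-m_N)$, converts the constraint $l^{(0)}+\dots+l^{(N)}=N\cdot\nu$ from~\reff{eq:Omega} into $l^{(1)}+\dots+l^{(N)}=(N-1)\cdot\nu+(m_1,\ldots,m_N)$ (the indexing set $\Theta$ in the theorem statement), and leaves the scalar $\prod_{i,j} d_{i,j}$ factored out in front of~\reff{eq:NEPdegree}.

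The main technical step is to rewrite each factor $\mc{S}_{\nu-l^{(i)}}(\hat d_{i,0},\hat d_{i,1},\ldots,\hat d_{i,m_i})$ as $\mc{T}^{(i)}_{\nu-l^{(i)}}(d_{i,0}-e_i,d_{i,1}-1,\ldots,d_{i,m_i}-1)$. In the NEP setting, $\hat d_{i,0}=d_{i,0}-e_i$ is a general multi-index but each $\hat d_{i,j}=(d_{i,j}-1)e_i$ for $j\ge 1$ is supported only on coordinate $i$, so the generating function defining $\mc{S}$ splits as
\[
\frac{1}{1-(d_{i,0}-e_i)\cdot t}\cdot\prod_{r=1}^{m_i}\frac{1}{1-(d_{i,r}-1)t_i}.
\]
The second factor is a univariate power series in $t_i$ whose coefficient of $t_i^{k}$ equals the complete homogeneous symmetric function $h_k(d_{i,1}-1,\ldots,d_{i,m_i}-1)=\sum_{\eta_1+\dots+\eta_{m_i}=k}\prod_r(d_{i,r}-1)^{\eta_r}$. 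I would extract the coefficient of $t^{\nu-l^{(i)}}$ by convolving this with the multinomial expansion of the first factor and reindex via $\eta_0=(\nu-l^{(i)})_i-k$; the resulting triple sum matches the summation range $\eta_0+\dots+\eta_{m_i}=(\nu-l^{(i)})_i$ and the monomial factors appearing in the definition of $\mc{T}^{(i)}$.

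Part~(ii) then follows immediately from Theorem~\ref{tm:GNEPdegreeinactive}, since the set of possible active label tuples is the same $\Lambda$ and the corresponding Fritz-John loci remain pairwise disjoint under genericity; summing the formula from part~(i) over $E\in\Lambda$ gives the total count. The chief obstacle is the combinatorial bookkeeping in the coefficient extraction and the careful alignment of summation indices with the definition of $\mc{T}^{(i)}$; the remaining steps are direct specializations of results already in hand.
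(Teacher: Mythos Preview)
Your proposal is correct and follows essentially the same route as the paper: both arguments specialize Theorem~\ref{tm:GNEPdegree} (and Theorem~\ref{tm:GNEPdegreeinactive} for part~(ii)) to constraint multi-degrees $d_{i,j}\,e_i$, observe that the $\mc{A}$-factor is supported only at $\nu-l^{(0)}=(m_1,\ldots,m_N)$, and then simplify each $\mc{S}_{\nu-l^{(i)}}$ to $\mc{T}^{(i)}_{\nu-l^{(i)}}$. The only cosmetic difference is that the paper carries out the $\mc{S}\to\mc{T}$ reduction directly from the sum-over-matrices definition~\reff{eq:mcS} (noting that any $B$ with a nonzero off-$i$ entry in columns $2,\ldots,m_i+1$ contributes zero), whereas you use the equivalent generating-function description; both computations are immediate from characterizations already stated in Section~\ref{sc:MPV}.
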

\begin{proof}
(i) Since each $g_{i,j}$ is generic in $\cpx[\xpi]_{d_{i,j}}$,
so it is generic in the space of all polynomials in $\cpx[x]$ with multi-degree
$h_{i,j}\,\coloneqq\,e_i\cdot d_{i,j}$.
Therefore, we may apply Theorem~\ref{tm:GNEPdegree} to compute the algebraic degree for the case that every player's active label set is $[m_i]$.

Let $m\,\coloneqq\, m_1+\ldots+m_N$.
For each $\delta\,\coloneqq\,(\delta_1\ddd \delta_N)$ such that $\delta_1+\ldots+\delta_N=m$,
the $\mc{A}_{\dt}(h_{1,1},\dots,h_{1,m_i},\dots,h_{N,m_N})$ is the coefficient of $t^{\dt}$ in the product
\[\prod_{i=1}^N\prod_{j=1}^{m_i}(h_{i,j,1}t_1+\ldots+h_{i,j,N}t_N).\]
Therefore, $\mc{A}_{\dt}(h_{1,1},\dots,h_{1,m_i},\dots,h_{N,m_N})$ is nonzero if and only if $\delta=(m_1\ddd m_N)$,
and \[\mc{A}_{(m_1\ddd m_N)}(h_{1,1},\dots,h_{1,m_i},\dots,h_{N,m_N})=\prod_{i=1}^N d_{i,1}\cdot \ldots\cdot d_{i,m_i}.\]

For a given $i\in[N]$, consider
\[\mc{S}_{\delta}(\hat{d}_{i,0}, \hat{h}_{i,1},\dots,\hat{h}_{i,m_i})=\sum_{B\in \N^{[\delta\times (m_i+1)]}}\binom{|B_{:,1}|}{B_{:,1}}\hat{d}_{i,0}^{B_{:,1}}\cdot\prod_{j=1}^{m_i}\binom{|B_{:,j+1}|}{B_{:,j+1}}\hat{h}_{i,j}^{B_{:,j+1}},\]
where
\[\hat{d}_{i,0}\,\coloneqq\, d_{i,0}-e_i,\quad \hat{h}_{i,j}\,\coloneqq\, h_{i,j}-e_i.  \]
For the $B\in \N^{[\delta\times (m_i+1)]}$,
if there exists $(r,s)$ such that $r\ge 2$, $s\ne i$, and $B_{r,s}\ne0$,
then $\prod_{j=1}^{m_i}\binom{|B_{:,j+1}|}{B_{:,j+1}}\hat{h}_{i,j}^{B_{:,j+1}}=0$ since $\hat{h}_{i,r-1,s}=0$.
On the other hand, if we assume $B_{r,s}=0$ for all $r\ge 2$ and $s\ne i$,
then $B_{1,s}=\delta_s$ for all $s=i$.
This is because the sum for the $s$th row of $B$ equals $\delta_s$.
If we denote the $i$th row of $B$ by $(\eta_0\ddd \eta_{m_i})$,
then $\eta_0+\eta_1+\ldots+\eta_{m_i}=\delta_i$.
By considering all possible $B\in \N^{[\delta\times (m_i+1)]}$, we get
\[
\mc{S}_{\delta}(\hat{d}_{i,0}, \hat{h}_{i,1},\dots,\hat{h}_{i,m_i})
= \mc{T}_{\delta}^{(i)}(d_{i,0}-e_i, d_{i,1}-1\ddd d_{i,m_i}-1).
\]
We get the formula (\ref{eq:NEPdegree}) by Theorem~\ref{tm:GNEPdegree}.

(ii) This is implied by Theorem~\ref{tm:GNEPdegreeinactive}
and the item (i).
\end{proof}

\subsection{The case of non-generic polynomials}

When the polynomials $f_i, g_{i,j}$ are not generic,
the set $\mc{W}$ may not be finite.
Even if it is a finite set,
the degree formulae in Theorem~\ref{tm:GNEPdegree} and Theorem~\ref{tm:GNEPdegreeinactive} only give an upper bound for the number of points in $\mc{W}$, and it may not be sharp.
For such cases, we exploit the perturbation argument to
get a better upper bound for the algebraic degree.
For the given tuple $E\,\coloneqq\,(E_1\ddd E_N)$
of active constraint label sets,
we enumerate $ E_i$ as $[m_i]$.
Denote by $\check{d}_{i,j}^{(k)}$ (resp., $\check{d}_{i,0}^{(k)}$)
the multi-degree for the $k$th entry of $\nabla_{x_i}g_{i,j}$ (resp., $\nabla_{x_i}f_i$).
For every $i=1\ddd N$ and $j=0\ddd m_i$, we let
\be   \label{eq:nongdijhat}
\check{d}_{i,j} \,\coloneqq\,
\left(\max_{k\in[n_i]}\check{d}_{i,j,1}^{(k)},
\max_{k\in[n_i]}\check{d}_{i,j,2}^{(k)} \ddd
\max_{k\in[n_i]}\check{d}_{i,j,N}^{(k)}\right).
\ee
Recall the label set $\Omega$ in (\ref{eq:Omega}).

\begin{theorem}  \label{tm:nong}
For the GNEP, let $d_{i,0}$ be the multi-degrees of $f_i$,
$d_{i,j}$ be the multi-degrees of $g_{i,j}$,
and for each $i,j$, the $\check{d}_{i,j}$ is given by (\ref{eq:nongdijhat}).
Given the tuple of active constraint label set $E=([m_1] \ddd [m_N])$,
if there are finitely many complex Fritz-John points,
then their total number is bounded by the following sum
\be\label{eq:GNEPdegnong}
\sum_{(l^{(0)},\dots,l^{(N)})\in\Omega}
\mc{A}_{\nu-l^{(0)}}(d_{1,1},\dots,d_{1,m_i},\dots,d_{N,m_N})\cdot
\prod_{i=1}^N \mc{S}_{\nu-l^{(i)}}(\check{d}_{i,0},\dots,\check{d}_{i,m_i}).
\ee
Moreover, if we let $\Lambda$ be the set of all possible tuples of active constraint labels
and denote by $\check{D}_{E}$ the bound in (\ref{eq:GNEPdegnong}) with the tuple of active constraints $E$,
then the number of all Fritz-John points is bounded by $\sum_{E\in\Lambda}\check{D}_{E}$.
\end{theorem}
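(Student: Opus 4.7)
The strategy is a perturbation argument that reduces the non-generic case back to the generic setting of Theorem~\ref{tm:GNEPdegree}, but tracking the \emph{effective} multi-degrees $\check{d}_{i,j}$ in place of the nominal $\hat{d}_{i,j}$ appearing there. First, for each $i\in[N]$ and $j\in\{0,1\ddd m_i\}$, pick a generic polynomial $\td{p}_{i,j}\in\cpx[x]$ of multi-degree $\check{d}_{i,j}+e_i$. Under the natural non-degeneracy $d_{i,j,i}\ge 1$ (the degenerate case $d_{i,j,i}=0$ means $g_{i,j}$ does not involve $x_i$, so the corresponding Jacobian column vanishes identically and the problem reduces to one with that constraint removed), one checks that $\check{d}_{i,j}+e_i\le d_{i,j}$ componentwise. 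For $\eps\ne 0$ form the perturbed data $f_i^{\eps}\deq f_i+\eps \td{p}_{i,0}$ and $g_{i,j}^{\eps}\deq g_{i,j}+\eps \td{p}_{i,j}$; these retain the multi-degrees $d_{i,0}$ and $d_{i,j}$, while their Jacobian entries $\partial g_{i,j}^{\eps}/\partial x_{i,k}$ and $\partial f_i^{\eps}/\partial x_{i,k}$ are now generic in the spaces of polynomials of multi-degrees $\check{d}_{i,j}$ and $\check{d}_{i,0}$ respectively.

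Next, apply the argument of Theorem~\ref{tm:GNEPdegree} verbatim to the perturbed data, except that Proposition~\ref{pr:matdeg} is invoked with the multi-degrees $\check{d}_{i,0}\ddd \check{d}_{i,m_i}$ instead of $\hat{d}_{i,0}\ddd \hat{d}_{i,m_i}$. This yields $(\deg\widetilde{\mc{V}}_i^{\eps})_l = \mc{S}_{\nu-l}(\check{d}_{i,0},\dots,\check{d}_{i,m_i})$, while $\wtU^{\eps}$ is still a complete intersection of multi-degrees $d_{i,j}$ with $(\deg\wtU^{\eps})_l = \mc{A}_{\nu-l}(d_{1,1},\dots,d_{N,m_N})$ by Lemma~\ref{lm:completedeg}. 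Transversality of the intersection $\wtW^{\eps}=\wtU^{\eps}\cap\widetilde{\mc{V}}_1^{\eps}\cap\ldots\cap\widetilde{\mc{V}}_N^{\eps}$ follows from Proposition~\ref{pr:bertini} applied to the family obtained by varying the coefficients of the $\td{p}_{i,j}$, and Lemma~\ref{lm:interdeg} then combines the component degrees into exactly the expression in (\ref{eq:GNEPdegnong}).

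Finally, let $\eps\to 0$. By hypothesis the original Fritz-John set $\mc{W}$ is finite, so the limiting intersection $\wtW$ is zero-dimensional, hence proper. Upper semi-continuity of the length of zero-dimensional intersections in the family, equivalently the multi-projective Bezout principle of \cite{VDW1978}, bounds $|\mc{W}|$ by the scheme-theoretic length of $\wtW^{\eps}$ for generic $\eps$, which equals the quantity in (\ref{eq:GNEPdegnong}). The second assertion, giving $\sum_{E\in\Lambda}\check{D}_E$, is obtained by applying the above for each admissible active-set tuple $E\in\Lambda$ and summing, exactly as in the proof of Theorem~\ref{tm:GNEPdegreeinactive}.

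The main obstacle is the semi-continuity step. The perturbation is restricted to the linear subspace in which the entries of $\wt{J}_i$ have multi-degrees bounded by $\check{d}_{i,j}$, and this subspace is strictly smaller than the full parameter space of polynomials of multi-degrees $d_{i,j}$ used in Theorem~\ref{tm:GNEPdegree}. One must verify both that Bertini-type transversality continues to hold within this constrained family, so that (\ref{eq:GNEPdegnong}) really is the generic count, and that the scheme-length of $\wtW^{\eps}$ does not strictly drop upon specialisation to $\eps=0$. Both rely essentially on the standing hypothesis that $\wtW$ is zero-dimensional, which makes the limiting intersection proper.
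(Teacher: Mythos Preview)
Your approach and the paper's share the same high-level idea---a one-parameter perturbation to a situation where the Fritz--John count can be read off from the degree formulae of Section~\ref{sc:MPV}---but they differ in \emph{what} is perturbed, and this difference is where your argument has a gap.

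The paper perturbs the \emph{columns of the Jacobian} directly: it replaces $\nabla_{x_i}f_i$ by $\nabla_{x_i}f_i+t\phi_i$ and $\nabla_{x_i}g_{i,j}$ by $\nabla_{x_i}g_{i,j}+t\gamma_{i,j}$, with $\phi_i,\gamma_{i,j}$ generic vectors of polynomials of multi-degree $\check d_{i,0},\check d_{i,j}$. This is a \emph{non-integrable} perturbation---the perturbed column is not the gradient of any polynomial---so the constraint variety $\wtU$ stays fixed throughout the family while only the determinantal loci $(\wtV_t)_i$ move. The bound~\reff{eq:GNEPdegnong} then comes from counting the irreducible curves in the one-dimensional total family $(\wtV_t)_1\cap\cdots\cap(\wtV_t)_N\cap\wtU\subset\Pj^\nu\times\cpx$, each of which must hit the fibre $t=0$.

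You instead perturb the polynomials $f_i,g_{i,j}$ themselves. The problem is that your perturbation $g_{i,j}^\eps=g_{i,j}+\eps\td p_{i,j}$ adds a term of multi-degree $\check d_{i,j}+e_i$, which is in general \emph{strictly smaller} than $d_{i,j}$. So the leading (top multi-degree) part of $g_{i,j}^\eps$ is exactly that of the original, non-generic $g_{i,j}$, and $g_{i,j}^\eps$ is \emph{not} a generic polynomial of multi-degree $d_{i,j}$ for any $\eps$. Consequently you cannot invoke Lemma~\ref{lm:completedeg} for $\wtU^\eps$, nor can you ``apply the argument of Theorem~\ref{tm:GNEPdegree} verbatim'': that argument needs $\wtU$ to be a complete intersection with the Bezout multi-degree $\mc A_{\nu-l^{(0)}}(d_{1,1},\ldots,d_{N,m_N})$, and it needs the transversality of $\wtU\cap\wtV_1\cap\cdots\cap\wtV_N$, neither of which follows from genericity of $\eps$ alone when the top-order terms of the $g_{i,j}^\eps$ are frozen. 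The paper's device of leaving $\wtU$ untouched and perturbing only the Jacobian columns is precisely what makes the degree count~\reff{eq:GNEPdegnong} available for the generic fibre while still specialising back to the given data at $t=0$.

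A secondary issue: your semi-continuity step is phrased in the wrong direction. Upper semi-continuity of length says the scheme length can only \emph{increase} at the special fibre $\eps=0$, i.e.\ $\ell(\wtW^0)\ge\ell(\wtW^\eps)$, which by itself does not bound $|\mc W|$ from above by $\ell(\wtW^\eps)$. What one actually needs is that every component of the total family has dimension at least one (so no isolated points appear over $\eps=0$); the paper obtains this from the complete-intersection description in $\Pj^\nu\times\Pj^{m_1}\times\cdots\times\Pj^{m_N}\times\cpx$ and \cite[Proposition~17.24]{Harris1992}, and then runs a curve-counting argument rather than a bare semi-continuity appeal.
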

\begin{proof}
Given the active constraint $E=(E_1\ddd E_m)$ such that every $E_i=[m_i]$.
Let $\nabla_{\xpi} f_{i}+t\phi_i$ with $\phi_i\in \cpx[x]_{\check{d}_{i,0}}$ be a generic perturbation of $\nabla_{\xpi} f_{i}$ parameterized by $t$.
Similarly, for each $j\in[m_i]$,
we let $\nabla_{x_i}g_{i,j}+ t\gamma_{i,j}$ with $\gamma_{i,j} \in(\cpx[x]_{\check{d}_{i,j}})^{m_i}$ be the generic perturbation of $\nabla_{x_i}g_{i,j}$.
Denote the multi-homogenization in $x$ of $\nabla_{x_i}g_{i,j}+ t\gamma_{i,j}$ by $(\widetilde{g_{t}})_{i,j}$,
and the multi-homogenization in $x$ of
$\nabla_{\xpi} f_i+t\phi_i$ by $(\widetilde{f_{t}})_{i}$. Let
\[(J_{t})_i(\tilde{x}) \,\coloneqq\,
\bbm
(\widetilde{f_{t}})_{i}(\tilde{x}) &(\widetilde{g_{t}})_{i,1}(\tilde{x}) & \dots &(\widetilde{g_{t}})_{i,m_i}(\tilde{x})
\ebm,
\]
\[
(\widetilde{\mc{V}}_{t})_i \,\coloneqq\, \left\{
(\tilde{x}_i,\tilde{x}_{-i},t ) \in\mathbb{P}^{\nu}\times \cpx:
\rank\,(\wt{J}_{t})_i(\tilde{x})\le \eta_i\right\}.
\]
Then
$(\widetilde{\mc{V}}_{t})_i \subseteq \mathbb{P}^{\nu}\times \cpx$
is a variety with a projection $(\widetilde{\mc{V}}_{t})_i\to \cpx$ such that each fiber
$(\widetilde{\mc{V}}_{t=a})_i\subseteq \mathbb{P}^{\nu}$
is a multi-projective variety for each $a\in \cpx$.
Furthermore, since all $\phi_i$ and $\gm_{i,j}$ are generic, the set
\[
\wtW(a)\deq (\wtV_{t=a})_1\cap \dots \cap (\wtV_{t=a})_N\cap \wtU
\]
is finite for all but at most finitely many $a\in \cpx$,
by Theorem~\ref{tm:genfinite}.
In fact, by the genericity assumptions as above,
$(\wtV_t)_1\cap \dots \cap (\wtV_t)_N\cap \wtU$ is $1$-dimensional.
Note that each irreducible component of $(\wtV_t)_1\cap \dots \cap (\wtV_t)_N\cap \wtU$ has a dimension not less than one, by Theorem~\ref{tm:genfinite} (see also \cite[Proposition~17.24]{Harris1992}).
For each $u\in (\wtV_t)_1\cap \dots \cap (\wtV_t)_N\cap \wtU$,
it is contained in an irreducible component whose dimension is greater than or equal to one.
So, there exists a curve $u_t \subset \widetilde{\mc{V}_{t}}_1\cap \dots \cap \widetilde{\mc{V}_{t}}_N\cap \wtU$
that maps onto  $\cpx$ and such that $u\in u_t\cap \widetilde{\mc{V}}_1\cap \dots \cap \widetilde{\mc{V}}_N\cap \wtU$.
By genericity of $(\widetilde{g_{t}})_{i,j}$ and elimination theory (see \cite[Chapter~3]{cox2013book}),
for a generic $a\in\cpx$, the $u(a)\in \wtW(a)$ satisfies
\[
\alpha_{\xi}(a)u_{i,j}(a)^{\xi}+
\alpha_{\xi-1}(a)u_{i,j}(a)^{\xi-1}+\dots+
\alpha_{1}(a)u_{i,j}(a)^{1}+\alpha_0(a)=0,
\]
where
\[
\xi=\sum_{(l^{(0)},\dots,l^{(N)})\in\Omega}
\mc{A}_{\nu-l^{(0)}}(d_{1,1},\dots,d_{1,m_i},\dots,d_{N,m_N})\cdot
\prod_{i=1}^N \mc{S}_{\nu-l^{(i)}}(\check{d}_{i,0},\dots,\check{d}_{i,m_i})
\]
and $\alpha_{\xi}(a)$ are rational functions of the coefficients of
$f_i$, $\phi_i$, $g_{i,j}$ and $\gm_{i,j}$.
So there are at most $\xi$ many such curves.
Therefore, the intersection
$\widetilde{\mc{V}}_1\cap \dots \cap \widetilde{\mc{V}}_N\cap \wtU$
has at most $\xi$ points, and (\ref{eq:GNEPdegnong})
gives an upper bound for the algebraic degree of the GNEP.

If for each $ E \in\Lambda$, there exist finitely
many Fritz-John points with active constraints $ E $,
then the set of all complex Fritz-John points is the union over all
$ E \in\Lambda$, and the total number for them is bounded
by the sum of all $\check{\mc{D}}_{E}$.
\end{proof}

\begin{remark}
When $f_i,g_{i,j}$ are all generic polynomials, the multi-degrees of their partial gradients equals $\hat{d}_{i,j}$, and the algebraic degree of the GNEP is given by Theorems~\ref{tm:GNEPdegree} and \ref{tm:GNEPdegreeinactive}.
However, for the given polynomials $f_i,g_{i,j}$, the multi-degrees of $\nabla_{x_i}f_i$ and $\nabla_{x_i}g_i$ may be smaller than $\hat{d}_{i,j}$ componentwisely.
Note that even for the non-generic cases where $\check{d}_{i,j}\ne \hat{d}_{i,j}$ for some $i\in[N]$ and $j\in[m_i]$, it is possible that there are finitely many Fritz-John points.
See Example~\ref{eq:ipjsGNEP} for such an exposition.
When there are finitely many complex Fritz-John points and $\check{d}_{i,j}\ne \hat{d}_{i,j}$,
the formula (\ref{eq:GNEPdeg}) in Theorems~\ref{tm:GNEPdegree} (formula (\ref{eq:GNEPdeginactive}) in Theorem~\ref{tm:GNEPdegreeinactive} when there are inequality constraints) still gives an upper bound for the algebraic degree.
However, this upper bound cannot be sharp, and this motivates us to consider calculating the algebraic degree using $\check{d}_{i,j}$ instead of $\hat{d}_{i,j}$ in Theorem~\ref{tm:nong}.
Indeed, if $\check{d}_{i,j}\ne \hat{d}_{i,j}$, then the upper bound given by (\ref{eq:GNEPdeginactive}) is greater than that given by (\ref{eq:GNEPdegnong}) in Theorem~\ref{tm:nong}.
For instance, for the GNEP in Example~\ref{eq:ipjsGNEP},
the upper bound given by (\ref{eq:GNEPdeginactive}) equals $230$, and the upper bound given by (\ref{eq:GNEPdegnong}) is $190$, which is exactly the number of Fritz-John points of the GNEP.
\end{remark}

In particular, suppose for every $i\in[N]$ and for every $j\in[m_i]$,
the multi-degrees of all entries in $\nabla_{\xpi}g_{i,j}(x)$ are equal.
Then the $\Pj(\mc{V}_i)$
equals the rank-deficient variety for
\be\label{eq:mcJ}
\wt{\mc{J}}_i(\tilde{x}_i, \tilde{x}_{-i}) \,  \,\coloneqq\,  \,
\left[\begin{array}{cccc}
\wt{\frac{\pt f_i}{\pt{x_{i,1}}}}(\tilde{x}) & \wt{\frac{\pt g_{i,1}}{\pt{x_{i,1}}}}(\tilde{x}) & \cdots   & \wt{\frac{\pt g_{i,m_i}}{\pt{x_{i,1}}}}(\tilde{x})  \\
\wt{\frac{\pt f_i}{\pt{x_{i,2}}}}(\tilde{x}) & \wt{\frac{\pt g_{i,1}}{\pt{x_{i,2}}}}(\tilde{x}) & \cdots   & \wt{\frac{\pt g_{i,m_i}}{\pt{x_{i,2}}}}(\tilde{x})  \\
 \vdots &  \vdots & \ddots &  \vdots \\
\wt{\frac{\pt f_i}{\pt{x_{i,N}}}}(\tilde{x}) & \wt{\frac{\pt g_{i,1}}{\pt{x_{i,N}}}}(\tilde{x}) & \cdots   & \wt{\frac{\pt g_{i,m_i}}{\pt{x_{i,N}}}}(\tilde{x})  \\
\end{array}
\right].
\ee
In the above, each $\wt{\frac{\pt g_{i,j}}{\pt{x_{i,k}}}}(\tilde{x})$
denotes the multi-homogenization for $\frac{\pt g_{i,j}}{\pt{x_{i,k}}}(\tilde{x})$.
Denote by $\wt{\mc{Y}}_i$ the multi-projective variety
$\{\tdx\in\Pj^{\nu}: \rank\,\wt{\mc{J}}_i(\tilde{x}_i, \tilde{x}_{-i})\le m_i\}$,
and let
$\wt{\mc{J}^{\circ}_i}(\tilde{x}_i, \tilde{x}_{-i})$
be the submatrix of $\wt{\mc{J}}_i(\tilde{x}_i, \tilde{x}_{-i})$
which consists of the right $m_i$ columns.
If we further assume that the intersection $\wtU$ is transversal,
and for each $i\in[N]$, the
\[
\{\rank\,\wt{\mc{J}^{\circ}_i}(\tilde{x}_i, \tilde{x}_{-i})\le m_i-1\}
\cap \wtU=\emptyset,
\]
then the intersection
\[\wt{\mc{Y}}_1\cap \dots \cap \wt{\mc{Y}}_N \cap\wtU\]
is transversal when all $f_i$ are generic.
This is because each $\wt{\mc{Y}}_i$
is linearly parameterized by coefficients of $f_i$,
and the set of fixed points for this section is contained in the intersection
\[
\{\td{x}\in\Pj^{\nu}: \, \rank\,\wt{\mc{J}^{\circ}_i}(\tilde{x}_i, \tilde{x}_{-i})\le m_i-1\}
\cap \wtU,
\]
if we vary all the coefficients of all $f_i.$
In this case, if all $f_i$ are generic,
then the number of Fritz-John points with active constraints labels
$[m_1]\ddd [m_N]$ is given by (\ref{eq:GNEPdegnong}). From the above discussion
and by Theorem~\ref{tm:nong}, we have the following result.

\begin{proposition}
Under the assumptions in Theorem~\ref{tm:nong},
suppose for all $i\in[N]$ and $j\in[m]_i$,
the multi-degrees of all entries in $\nabla_{\xpi}g_i(x)$ are equal,
and hypersurfaces defined by all $\td{g}_{i,j}(\tdx)=0$ intersect transversely.
Denote
\[
\wt{\mc{Y}}_i\,\coloneqq\,\{ \tdx\in\Pj^{\nu}:
\rank\,\wt{\mc{J}}_i(\tilde{x}_i, \tilde{x}_{-i})\le m_i\} .
\]
If the intersection $\wt{\mc{Y}}_1\cap \dots \cap \wt{\mc{Y}}_N \cap\wtU$ is transversal,
then the upper bound given by (\ref{eq:GNEPdegnong}) is sharp.
\end{proposition}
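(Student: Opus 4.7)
The plan is to turn the upper bound \reff{eq:GNEPdegnong} into an equality by computing the algebraic degree of the transversal intersection $\wt{\mc{Y}}_1\cap\cdots\cap\wt{\mc{Y}}_N\cap\wtU$ directly via the intersection-theoretic machinery of Section~\ref{sc:MPV}, and then identifying that intersection set-theoretically with the complex Fritz-John points.

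First, I would observe that the equal-degree hypothesis makes $\wt{\mc{J}}_i(\tdx)$ into an $n_i\times(m_i+1)$ matrix whose $(j+1)$st column consists of multi-homogeneous polynomials of a common multi-degree $\check{d}_{i,j}$. This is exactly the setup of Proposition~\ref{pr:matdeg}(ii) with $r=n_i$ and $s=m_i+1$, which yields
\[
(\deg \wt{\mc{Y}}_i)_{l^{(i)}} \,=\, \mc{S}_{\nu-l^{(i)}}(\check{d}_{i,0},\ldots,\check{d}_{i,m_i})
\]
for every $l^{(i)}\in[n-n_i+m_i]^{(\nu)}$. Meanwhile, the transverse intersection assumption on the $\td{g}_{i,j}=0$ makes $\wtU$ a complete intersection of codimension $m$, and Lemma~\ref{lm:completedeg}(ii) gives $(\deg\wtU)_{l^{(0)}}=\mc{A}_{\nu-l^{(0)}}(d_{1,1},\ldots,d_{N,m_N})$ for $l^{(0)}\in[n-m]^{(\nu)}$.

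Next, since the codimensions of $\wt{\mc{Y}}_1,\ldots,\wt{\mc{Y}}_N,\wtU$ sum to $\sum_i(n_i-m_i)+m=n$, the hypothesized transversality forces $\wt{\mc{Y}}_1\cap\cdots\cap\wt{\mc{Y}}_N\cap\wtU$ to be zero-dimensional. I would apply Lemma~\ref{lm:interdeg} iteratively, following the same induction used in the proof of Theorem~\ref{tm:GNEPdegree}, to assemble the degrees of the $\wt{\mc{Y}}_i$ into a single summation, and then combine with $\deg\wtU$ via the multi-projective B\'{e}zout formula \reff{eq:multibezout}. The resulting sum is indexed precisely by the label set $\Omega$ from \reff{eq:Omega}, with the constraint $l^{(0)}+\cdots+l^{(N)}=N\cdot\nu$ arising naturally from the complementary-index bookkeeping at each stage of Lemma~\ref{lm:interdeg}. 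The outcome matches \reff{eq:GNEPdegnong} term by term.

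Finally, I need to identify the intersection points with complex Fritz-John points. For a point $\tdx$ in the intersection, the rank condition $\rank\wt{\mc{J}}_i(\tdx)\le m_i$ together with $\tdx\in\wtU$ encodes the Fritz-John system \reff{eq:FJwithLM} on the affine chart. The delicate step, and the main obstacle, is to rule out points at infinity: I would reuse the strategy of Theorem~\ref{tm:GNEPdegree}, lifting the intersection to the universal variety carrying Lagrange multipliers and applying Proposition~\ref{pr:bertini} together with the genericity of $f_i$ to exclude the loci $\{x_{i,0}=0\}$. The equal-degree hypothesis is precisely what keeps the degree computation above faithful to the perturbed Fritz-John system analyzed in Theorem~\ref{tm:nong}, so that the general inequality there becomes an equality here and the upper bound is sharp.
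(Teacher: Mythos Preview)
Your proposal is correct and follows essentially the same approach as the paper. The paper's proof is the discussion immediately preceding the proposition together with Theorem~\ref{tm:nong}: it observes that the equal-degree hypothesis makes $\wt{\mc{Y}}_i$ the correct projective closure of $\mc{V}_i$, and then re-runs the degree computation of Theorem~\ref{tm:GNEPdegree} with $\check{d}_{i,j}$ in place of $\hat{d}_{i,j}$---precisely the intersection-theoretic calculation you outline via Proposition~\ref{pr:matdeg}, Lemma~\ref{lm:completedeg}, and Lemma~\ref{lm:interdeg}, including the appeal to genericity of the $f_i$ (via Proposition~\ref{pr:bertini}) to exclude points at infinity.
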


\section{Computational experiments}
\label{sc:ce}

In this section, we present some computational results of complex Fritz-John points for GNEPs.
Recall that the set $\Lambda$ of all possible active sets $ E  \,\coloneqq\, ( E_1 \ddd  E_N)$ for the GNEP is given in (\ref{eq:Lambda}).
For each GNEP in this section and for every $E\in\Lambda$,
we use the symbolic computation software {\tt Macaulay2} \cite{macaulay2}
to evaluate the dimension of the ideal (of the polynomial ring $\mathbb{Q}[x]$)
\be\label{eq:m2_ideals} I_E = \sum_{i=1}^N I_{E_i}.
\ee
In the above, if we let $m_i=|E_i|$ and we enumerate $E_i:= (1\ddd m_i)$ for each $i\in[N]$, then
\[ I_{E_i} \,:=\, \left\{\begin{array}{ll}
\lip g_{i,1}\ddd g_{i,m_i}, \mathfrak{m}_{i,1},\ddd \mathfrak{m}_{i,\ell_i} \rip, & \mbox{ if } m_i< n_i \\
\lip g_{i,1}\ddd g_{i,m_i} \rip, & \mbox{ if } m_i\ge n_i
\end{array}\right.\]
where $\ell = \binom{n_i}{m_i+1}$, and $(\mathfrak{m}_{i,1},\ddd \mathfrak{m}_{i,\ell_i})$ is the tuple of all $(m_i+1)\times (m_i+1)$ minors of
\[ \lvt\  \nabla_{\xpi} f_i\ \nabla_{\xpi} g_{i,1} \ \nabla_{\xpi} g_{i,2} \ \ldots \nabla_{\xpi} g_{i,m_i}\ \rvt. \]
Thus if we let $\hat{m}_{E_i}:= |E_i| $ when $|E_i|\ge n_i$ and let $\hat{m}_{E_i}:= |E_i|+ \binom{n_i}{|E_i|+1}$ when $|E_i|<n_i$,
then $I_E$ is generated by $\hat{m}_{E_1}+\ldots+\hat{m}_{E_N}$ polynomials, and the number of variables is $n$, i.e., the dimension of $x$.
It is clear that $x$ is a complex Fritz-John point with the active set $E$ if and only if $x$ belongs to the complex affine variety $V(I_E)$.
Moreover, if $\dim I_{E} = 0$, then $\mathbb{Q}[x]/{I_E}$ is a finitely dimensional space, and we use the function {\tt degree} to calculate the dimension of $\mathbb{Q}[x]/{I_E}$, which equals the number of complex Fritz-John points with the active set $E$ (counting multiplicities); see \cite[Theorem~2.1.4]{Nie2023moment} and \cite[Proposition~2.1]{Sturmfels2002}.
Last, when there exist inequality constraints, i.e., the cardinality of $\Lambda$ is greater than one, we evaluate the dimension of $I_{E^{(1)}}+I_{E^{(2)}}$ for all $E^{(1)}, E^{(2)}\in \Lambda$ and $E^{(1)}\ne E^{(2)}$.
If $\dim (I_{E^{(1)}}+I_{E^{(2)}})<0$, then $V(I_{E^{(1)}})\cap V(I_{E^{(2)}})=\emptyset$, thus the algebraic degree of $V(I_{E^{(1)}})\cup V(I_{E^{(2)}})$ equals $\deg(V(I_{E^{(1)}})) + \deg(V(I_{E^{(2)}}))$; see \cite{Nie2009algebraic}.
The computation is conducted on a MacBook Pro laptop
with an Apple M3 Pro processor with 12 cores and 18GB of memory,
in the macOS 14.2.1 system.

Aside from the symbolic computation, we also report the upper bound of algebraic degrees obtained from (\ref{eq:GNEPdeg}) and (\ref{eq:GNEPdegnong}).
When inequality constraints exist, we denote by $\mc{D}_{E}$ the generic algebraic degree given by (\ref{eq:GNEPdeg})
for the Fritz-John variety with the tuple of active labeling set $ E $.
When the constraining polynomials $g_{i,j}$ are not generic,
the upper bound for the complex Fritz John points is denoted by 
$\check{\mc{D}}_{E}$, which is given in (\ref{eq:GNEPdegnong}).
We would like to remark that in the following examples,
we use variables $x$, $y$, $z$ to represent $x_1$, $x_2$ and $x_3$ respectively,
for the cleanness of the paper.

\subsection{Unconstrained problems}
For unconstrained problems,
the Fritz-John condition becomes $\nabla_{x_i}f_i(x)=0$ for all $i=1,\dots,N$.
For the generic $f_i$ with multi-degree $d_{i,0}=(d_{i,0,1},\dots,d_{i,0,N})$,
let $\hat{d_{i}}=(d_{i,0,1},\dots,d_{i,0,i}-1,\dots,d_{i,0,N})$.
Then, the upper bound of the algebraic degree for these equations given by Theorem~\ref{tm:GNEPdegree} equals
\be\label{eq:unconsdegree} \mc{A}_{\nu}(\underbrace{\hat{d}_{1},\dots,\hat{d}_{1}}_{n_1\mbox{ times}},\dots,\underbrace{\hat{d}_{N},\dots,\hat{d}_{N}}_{n_N\mbox{ times}}).\ee
Interestingly, one may check that it also equals
\[
\sum_{\substack{\alpha_{1}+\dots+\alpha_{N}=\nu \\ \alpha_i\in [n_i]^{(\nu)}}}\mc{S}_{\alpha_1}(\hat{d}_1)\cdots\dots\cdot\mc{S}_{\alpha_N}(\hat{d}_N)
=\sum_{\substack{\alpha_{1}+\dots+\alpha_{N}=\nu \\\alpha_i\in [n_i]^{(\nu)}}}\hat{d}_1^{\alpha_1}\cdot\dots\cdot \hat{d}_N^{\alpha_N}.\]

\begin{example}
Consider the two-player NEP that each player solves an unconstrained optimization problem.
The objective function for the first player is
\[\begin{array}{ll}
f_1(x,y)=&4x_1^2y_1^2 - x_1^2y_1y_3 - x_1^2y_1 + 4x_1^2y_3^2 - 2x_1^2y_3 - 2x_1x_2y_1y_2+ 5x_1x_2y_1y_3\\
&+ 3x_1x_2y_2^2 - 3x_1x_2y_2y_3 - 2x_1x_2y_2 - 4x_1x_2y_3^2- x_1x_3y_1^2+ 4x_1x_3y_1y_3  \\
&- 2x_1x_3y_1 + 3x_1x_3 + 3x_1y_2^2 + 3x_1y_2y_3 - x_1y_3- 2x_2^2y_1y_2+ 4x_2^2y_1y_3\\
&- 3x_2^2y_2^2 - 2x_2^2y_3^2 + 2x_2x_3y_2y_3+ 3x_2x_3y_2+3x_2x_3y_3^2 + 4x_2y_1^2 \\
& + 5x_2y_1y_3 + 3x_2y_1 + 4x_2y_2y_3 + 3x_2y_2+ 2x_2y_3^2 - 3x_3^2y_1y_2 + 2x_3^2y_2^2\\
&  + 4x_3^2y_2 - 3x_3^2y_3^2 - 2x_3^2y_3- 4x_3y_1^2 - 3x_3y_2y_3 + 4x_3y_3.
\end{array}
\]
For the second player, the objective function is
\[\begin{array}{ll}
f_2(x,y)=&3x_1^2y_1^2 - x_1^2y_1y_2 + 2x_1^2y_3 - 2x_1x_2y_1^2 + 2x_1x_2y_1y_2+ 3x_1x_2y_1y_3 \\
&- x_1x_2y_2y_3+ 5x_1x_3y_1^2 - x_1x_3y_1 + 5x_1x_3y_3^2- 2x_1y_1y_2 - x_1y_2^2 \\
&+ 3x_1y_2 + 3x_2^2y_1^2 + 3x_2^2y_1y_2+ 4x_2^2y_1y_3 + 3x_2^2y_1 - 2x_2^2y_2^2\\
&- 2x_2^2y_2 + 4x_2x_3y_1y_3+ 3x_2x_3y_2 - 2x_2y_1y_3+ 4x_2y_3- 2x_3^2y_1y_2\\
& + 2x_3^2y_1y_3- x_3^2y_2^2- x_3^2y_3^2- 2x_3^2y_3 + 2x_3y_2y_3- 3y_2^2 - 2y_2 - 2y_3.
\end{array}
\]
In this problem, $N=2$ and $\nu=(3,3)$.
The multi-degrees for both $f_1$ and $f_2$ are $(2,2)$,
and $\hat{d}_1=(1,2)$, $\hat{d}_2=(2,1)$.
The upper bound for the algebraic degree given by (\ref{eq:unconsdegree}) is
\[\mc{A}_{(3,3)}(\hat{d}_1,\hat{d}_1,\hat{d}_1,\hat{d}_2,\hat{d}_2,\hat{d}_2)=245.\]
Since there are no inequality constraints,
$\Lambda = \{ E \}$ with $E = (\emptyset, \emptyset)$.
The ideal $I_E$ is generated by six polynomials, that
\[ I_E = \lip \frac{\pt f_1}{\pt x_{1,1}}, \frac{\pt f_1}{\pt x_{1,2}}, \frac{\pt f_1}{\pt x_{1,3}}, \frac{\pt f_2}{\pt x_{2,1}}, \frac{\pt f_2}{\pt x_{2,2}}, \frac{\pt f_2}{\pt x_{2,3}} \rip. \]
There are six variables, and the first three generators have multi-degrees equal to $(1,2)$ while the later three have multi-degrees equal to $(2,1)$.
By the symbolic computation software {\tt Macaulay2}, we find that the Fritz-John variety has exactly $245$ points (counting multiplicities).
\end{example}

\subsection{NEPs with ball constraints}
Consider the two-player NEP such that each player's feasible strategy vector is bounded within the unit ball:
\begin{equation}
\label{eq:sphereNEP}
\begin{array}{cllcl}
    \min\limits_{x \in \re^3 }& f_1(x,y) &\vline&
    \min\limits_{y \in \re^3 }& f_2(x,y) \\
    \st & \Vert x\Vert^2\le 1 ,&\vline& \st &\Vert y\Vert^2\le 1.\\
\end{array}
\end{equation}
In this problem, $\nu=(3,3)$, $m=2$, and
\[d_{1,1}=(2,0),\ d_{2,1}=(0,2),\ \check{d}_{1,1}=(1,0),\ \check{d}_{2,1}=(0,1).\]
There are totally four cases of active constraints:
\be\label{eq:activeball} E^{(1)}=(\emptyset,\emptyset),\  E^{(2)}=(\{1\},\emptyset),\  E^{(3)}=(\emptyset,\{1\}),\  E^{(4)}=(\{1\},\{1\}).\ee
By Theorem~\ref{tm:nong}, an upper bound of algebraic degree for this NEP is given by
\begin{equation}\label{eq:SphNEPdegree}
\check{\mc{D}}_{ E^{(1)}}+\check{\mc{D}}_{ E^{(2)}}+\check{\mc{D}}_{ E^{(3)}}+\check{\mc{D}}_{ E^{(4)}}.
\end{equation}

\begin{example}
\label{ep:ballNEP}
Consider the two-player ball-constrained NEP (\ref{eq:sphereNEP}).
Let the objective functions be
\[\begin{array}{ll}
f_1(x,y)=&2x_2 - 2x_1 + 5x_3 + 2x_1x_3 + 5x_2x_3 - 3x_1y_1 - 2x_2y_1- 3x_1y_3\\
&- 2x_2y_2 - x_3y_1+ 4x_2y_3 + 4x_3y_3 + 4x_1^2y_1 + 2x_1^2y_2- x_2^2y_1\\
&+ 4x_2^2y_3+ 2x_3^2y_2 + 3x_3^2y_3 - x_1^2- x_3^2 + 4x_1x_2y_1+ 2x_1x_2y_2\\
&- x_1x_3y_1- x_1x_2y_3 - 3x_2x_3y_1 - x_1x_3y_3 - x_2x_3y_2 + 2x_2x_3y_3,
\end{array}
\]
\[\begin{array}{ll}
f_2(x,y)=&2x_1y_1 + 2x_1y_2 + 5x_2y_1 - 3x_2y_2 - 4x_3y_1 + 3x_3y_2 - 6x_3y_3- y_1y_2\\
&- x_1y_1^2+ 3x_1y_3^2 + 2x_2y_2^2+ 2x_2y_3^2 + 4x_3y_3^2 + 2y_2^2 + 4x_2y_1y_2\\
&+ 2x_1y_2y_3 - 3x_3y_1y_2 + 2x_2y_2y_3+3x_3y_1y_3 - x_3y_2y_3.
\end{array}
\]
The multi-degrees for objective functions are $d_{1,0}=(2,1)$ and $d_{2,0}=(1,2)$.
We have
\[\check{\mc{D}}_{ E^{(1)}}=20,\quad \check{\mc{D}}_{ E^{(2)}}=\check{\mc{D}}_{ E^{(3)}}=30,\quad \check{\mc{D}}_{ E^{(4)}}=76.\]
Therefore, the upper bound for the algebraic degree of this NEP given
by Theorem~\ref{tm:nong} (and thus by (\ref{eq:SphNEPdegree})) is $156$.
Also, for this GNEP, since $\hat{d}_{i,j} = \check{d}_{i,j}$
holds for all $i=1\ddd 3$ and $j=0,1$, thus the upper bound for the algebraic degree given by (\ref{eq:GNEPdegnong})is equal to that given by (\ref{eq:GNEPdeg}).
For the symbolic computation, we remark that there are $4$ cases of active sets as shown in (\ref{eq:activeball}),
and all $I_{E^{(j)}}\, (j=1\ddd 4)$ has $6$ variables.
The generators and their multi-degrees are presented in the following:
\begin{itemize}
  \item $I_{E^{(1)}} = \lip \frac{\pt f_1}{\pt x_{1,1}}, \frac{\pt f_1}{\pt x_{1,2}}, \frac{\pt f_1}{\pt x_{1,3}}, \frac{\pt f_2}{\pt x_{2,1}}, \frac{\pt f_2}{\pt x_{2,2}}, \frac{\pt f_2}{\pt x_{2,3}} \rip$: all generators has multi-degrees $(1,1)$.

  \item $I_{E^{(2)}} = \lip g_{1,1}, \mathfrak{m}_{1,1},\mathfrak{m}_{1,2},\mathfrak{m}_{1,3},\frac{\pt f_2}{\pt x_{2,1}}, \frac{\pt f_2}{\pt x_{2,2}}, \frac{\pt f_2}{\pt x_{2,3}}  \rip$: $\deg(g_{1,1}) = (2,0)$, $\deg(\mathfrak{m}_{1,j}) = (2,1)$ for all $j=1\ddd 3$, and $\deg (\frac{\pt f_2}{\pt x_{2,j}}) = (1,1)$ for all $j=1\ddd 3$.

  \item $I_{E^{(3)}} = \lip \frac{\pt f_1}{\pt x_{1,1}}, \frac{\pt f_1}{\pt x_{1,2}}, \frac{\pt f_1}{\pt x_{1,3}}, g_{2,1}, \mathfrak{m}_{2,1},\mathfrak{m}_{2,2},\mathfrak{m}_{2,3} \rip$:
  $\deg (\frac{\pt f_1}{\pt x_{1,j}}) = (1,1)$ for all $j=1\ddd 3$,
  $\deg(g_{2,1}) = (0,2)$, and $\deg(\mathfrak{m}_{2,j}) = (1,2)$ for all $j=1\ddd 3$.
  \item $I_{E^{(4)}} = \lip g_{1,1}, g_{2,1}, \mathfrak{m}_{1,1},\mathfrak{m}_{1,2},\mathfrak{m}_{1,3}, \mathfrak{m}_{2,1},\mathfrak{m}_{2,2},\mathfrak{m}_{2,3} \rip$:
  $\deg(g_{1,1}) = (2,0)$, $\deg(g_{2,1}) = (0,2)$,
  and $\deg(\mathfrak{m}_{1,j}) = (2,1)$, $\deg(\mathfrak{m}_{2,j}) = (1,2)$ for all $j=1\ddd 3$.
\end{itemize}
Using symbolic computation software {\tt Macaulay2},
we find that $V(I_{E^{(1)}})$ has $20$ points,
$V(I_{E^{(2)}})$ and $V(I_{E^{(3)}})$ has $30$ points,
and $V(I_{E^{(4)}})$ has $76$ points (all counting multiplicities).
Besides that, all these affine varieties are pairwisely disjoint.
Thus, the Fritz-John variety has exactly $156$ points (counting multiplicities).
\end{example}

\subsection{GNEP with generic joint-linear constraints}
Consider the NEP such that the $i$th player solves the following optimization problem
\begin{equation}
\label{eq:genericGNEP}
\begin{array}{cllcl}
    \min\limits_{x \in \re^3 }& f_1(x,y) &\vline&
    \min\limits_{y \in \re^3 }& f_2(x,y) \\
    \st & g_1(x,y)\ge0 ,&\vline& \st & g_2(x,y)\ge0.\\
\end{array}
\end{equation}
where $g_i(x,y)$ is a polynomial with multi-degree $(1,1)$, i.e.,
$g_i(x)$ is bilinear in $(x,y)$.
For this GNEP, the $\nu=(3,3)$, $m=2$, and
\[d_{1,1}=d_{2,1}=(1,1),\ \hat{d}_{1,1}=(0,1),\ \hat{d}_{2,1}=(1,0).\]
There are totally four cases of active constraints:
\[ E^{(1)}=(\emptyset,\emptyset),\  E^{(2)}=(\{1\},\emptyset),\  E^{(3)}=(\emptyset,\{1\}),\  E^{(4)}=(\{1\},\{1\}).\]
By Theorem~\ref{tm:GNEPdegreeinactive}, the algebraic degree for this GNEP equals
\begin{equation}\label{eq:genericlinearGNEPdegree}
\mc{D}_{ E^{(1)}}+\mc{D}_{ E^{(2)}}+\mc{D}_{ E^{(3)}}+\mc{D}_{ E^{(4)}}.
\end{equation}

\begin{example}
Let $f_1(x,y)$ and $f_2(x,y)$ be given as in Example~\ref{ep:ballNEP}.
Consider the two-player joint linear constrained GNEP (\ref{eq:genericGNEP}), with
\[\begin{array}{ll}
g_1(x,y)=4x_2 - 3x_1 + x_3 + 3y_1 + y_2 + 5y_3 + x_1y_1 - 3x_1y_2+ 4x_2y_1 \\
\qquad\qquad\qquad\qquad + 2x_1y_3 - 6x_2y_2 + 2x_3y_1 + 3x_2y_3 - x_3y_2 - 4x_3y_3 - 2,\\
g_2(x,y)=8x_1 + 3x_2 + 5x_3 - y_1 + y_3 + 7x_1y_1 - 3x_1y_2 - 6x_2y_1\\
\qquad\qquad\qquad\qquad - x_1y_3 + 4x_2y_2 - 2x_3y_1 - 2x_3y_2 + 7x_3y_3 + 3.
\end{array}
\]
For objective functions, the multi-degrees are $d_{1,0}=(2,1)$, $d_{2,0}=(1,2)$.
We have
\[\mc{D}_{ E^{(1)}}=20,\quad \mc{D}_{ E^{(2)}}=\mc{D}_{ E^{(3)}}=34,\quad \mc{D}_{ E^{(4)}}=62.\]
Therefore, the upper bound for the algebraic degree of this GNEP given by (\ref{eq:genericlinearGNEPdegree}) is $150$.
For the symbolic computation, we remark that there are $4$ cases of active sets,
and all $I_{E^{(j)}}\, (j=1\ddd 4)$ has $6$ variables.
The generators and their multi-degrees are presented in the following:
\begin{itemize}
  \item $I_{E^{(1)}} = \lip \frac{\pt f_1}{\pt x_{1,1}}, \frac{\pt f_1}{\pt x_{1,2}}, \frac{\pt f_1}{\pt x_{1,3}}, \frac{\pt f_2}{\pt x_{2,1}}, \frac{\pt f_2}{\pt x_{2,2}}, \frac{\pt f_2}{\pt x_{2,3}} \rip$: all generators has multi-degrees $(1,1)$.

  \item $I_{E^{(2)}} = \lip g_{1,1}, \mathfrak{m}_{1,1},\mathfrak{m}_{1,2},\mathfrak{m}_{1,3},\frac{\pt f_2}{\pt x_{2,1}}, \frac{\pt f_2}{\pt x_{2,2}}, \frac{\pt f_2}{\pt x_{2,3}}  \rip$: $\deg(g_{1,1}) = (1,1)$, $\deg(\mathfrak{m}_{1,j}) = (1,2)$ for all $j=1\ddd 3$, and $\deg (\frac{\pt f_2}{\pt x_{2,j}}) = (1,1)$ for all $j=1\ddd 3$.

  \item $I_{E^{(3)}} = \lip \frac{\pt f_1}{\pt x_{1,1}}, \frac{\pt f_1}{\pt x_{1,2}}, \frac{\pt f_1}{\pt x_{1,3}}, g_{2,1}, \mathfrak{m}_{2,1},\mathfrak{m}_{2,2},\mathfrak{m}_{2,3} \rip$:
  $\deg (\frac{\pt f_1}{\pt x_{1,j}}) = (1,1)$ for all $j=1\ddd 3$,
  $\deg(g_{2,1}) = (1,1)$, and $\deg(\mathfrak{m}_{2,j}) = (2,1)$ for all $j=1\ddd 3$.
  \item $I_{E^{(4)}} = \lip g_{1,1}, g_{2,1}, \mathfrak{m}_{1,1},\mathfrak{m}_{1,2},\mathfrak{m}_{1,3}, \mathfrak{m}_{2,1},\mathfrak{m}_{2,2},\mathfrak{m}_{2,3} \rip$:
  $\deg(g_{1,1}) = \deg(g_{2,1}) = (1,1)$,
  $\deg(\mathfrak{m}_{1,1})=\deg(\mathfrak{m}_{1,2})=\deg(\mathfrak{m}_{1,3}) = (1,2)$, and $\deg(\mathfrak{m}_{2,1}) = \deg(\mathfrak{m}_{2,2}) = \deg(\mathfrak{m}_{2,3}) = (2,1)$.
\end{itemize}
Using symbolic computation software {\tt Macaulay2},
we find that $V(I_{E^{(1)}})$ has $20$ points,
$V(I_{E^{(2)}})$ and $V(I_{E^{(3)}})$ has $34$ points,
and $V(I_{E^{(4)}})$ has $62$ points (all counting multiplicities).
Besides that, all these affine varieties are pairwisely disjoint.
Thus, the Fritz-John variety has exactly $150$ points (counting multiplicities).
\end{example}

\subsection{GNEP with inner product \& ball constraints}
Consider the two-player GNEP
\be\label{eq:ipjsGNEP}
\begin{array}{cllcl}
    \min\limits_{x \in \re^2 }& f_1(x,y) &\vline&
    \min\limits_{y \in \re^2 }& f_2(x,y) \\
    \st & x^T(y+\mathbf{e})=1,&\vline& \st &x^Tx+y^Ty\le2.\\
        & x_1\ge0 ,x_2\ge0; &\vline
\end{array}
\ee
For this GNEP, the $\nu=(2,2)$, $m=4$, and
\[d_{1,1}=(1,1),\ d_{1,2}=d_{1,3}=(1,0),\ d_{2,1}=(2,2).\]
Since the $g_{1,1}(x,y), g_{1,2}(x,y), g_{1,3}(x,y)$ (resp., $g_2(x,y)$) are not generic in $\re[x,y]_{(1,1)}$ (resp., $\re[x,y]_{(2,2)}$), the multi-degrees for their partial gradients are
\[\check{d}_{1,1}=(0,1),\ \check{d}_{1,2}=\check{d}_{1,3}=(0,0),\ \check{d}_{2,1}=(0,1).\]
Suppose the multi-degrees for $f_1$ and $f_2$ are $d_{1,0}$ and $d_{2,0}$.
Let $\check{d}_{1,0}$ and $\check{d}_{2,0}$ be defined as in Theorem~\ref{tm:nong}.
Note that in the first player's subproblem, three constraints cannot be active simultaneously,
since the last two constraints being active implies that $x_1=x_2=0$, and thus $x^T(y+\mathbf{e}) = 0\ne 1$.
So, there are totally six possibilities of active constraints, which are:
\[\begin{array}{lll}
 E^{(1)}=(\{1\},\emptyset), &  E^{(2)}=(\{1\},\{1\}), &  E^{(3)}=(\{1,2\},\emptyset),\\
 E^{(4)}=(\{1,2\},\{1\}), &  E^{(5)}=(\{1,3\},\emptyset), &  E^{(6)}=(\{1,3\},\{1\}).
\end{array}\]
By Theorem~\ref{tm:nong},
an upper bound for the algebraic degree is given by (\ref{eq:GNEPdegnong}), which equals
\be\label{eq:xjointsphGNEPdegree}
\check{\mc{D}}_{ E^{(1)}}+\check{\mc{D}}_{ E^{(2)}}+\check{\mc{D}}_{ E^{(3)}}+\check{\mc{D}}_{ E^{(4)}}+\check{\mc{D}}_{ E^{(5)}}+\check{\mc{D}}_{ E^{(6)}}.
\ee

\begin{example}\label{ep:iprod}
Consider the GNEP (\ref{eq:ipjsGNEP}) where the objective functions are
\[\begin{array}{ll}
f_1(x,y)=&3x_1^3y_1y_2 - x_1^3y_2^2 - x_1^3y_2 - x_1^3 + 3x_1^2x_2y_1^2 + 2x_1^2x_2y_2- x_1^2y_1^2\\
&+ 2x_1^2y_1y_2 - x_1^2y_1 + 4x_1^2 + 7x_1x_2^2y_1^2 + 2x_1x_2^2y_1+ 2x_1x_2^2\\
& + 2x_1x_2y_1^2- 4x_1x_2y_1y_2+ 2x_1x_2y_1 + 2x_1x_2y_2 - 4x_1y_1 \\
&- 4x_1y_2^2+ 3x_1y_2 + 3x_2^3y_1^2 + 4x_2^3y_1y_2+ 2x_2^3y_1 - x_2^3y_2 + 5x_2^2y_1^2 \\
&+ 3x_2^2y_1y_2- 2x_2^2y_2^2 + 2x_2^2y_2 + 2x_2^2 - 2x_2y_1^2- x_2y_1y_2- x_2y_2^2.
\end{array}
\]
\[\begin{array}{ll}
f_2(x,y)=&3x_1^2y_1^3 - x_1^2y_1^2 - 3x_1^2y_1y_2^2 - x_1^2y_1y_2 - 2x_1^2y_2^2 - x_1^2y_2+ 3x_1x_2y_1^3\\
&- x_1x_2y_1^2y_2- x_1x_2y_1^2 - x_1x_2y_1y_2 - x_1x_2y_1 - 3x_1x_2y_2^3- 2x_1x_2y_2^2\\
&+ 4x_1x_2y_2- x_1y_1^3+ 2x_1y_1^2y_2 - x_1y_1^2 + 3x_1y_1y_2^2 - x_1y_2^3+ 2x_1y_2^2\\
& - 2x_1y_2 + 2x_2^2y_1^3 + 2x_2^2y_1^2y_2+ 2x_2^2y_1y_2 + 2x_2^2y_1 - x_2^2y_2^3+ 4x_2^2y_2^2\\
&- 2x_2^2y_2  - 2x_2y_1y_2^2 - x_2y_1y_2 + 2x_2y_2 + 3y_1^3- 3y_1^2 - y_1 + 3y_2^3- y_2^2.
\end{array}
\]
For objective functions, the multi-degrees are $d_{1,0}=(3,2)$, $d_{2,0}=(2,3)$, and
\[\begin{array}{ccc}
\check{\mc{D}}_{ E^{(1)}}=60, & \check{\mc{D}}_{ E^{(2)}}=74, & \check{\mc{D}}_{ E^{(3)}}=12, \\
\check{\mc{D}}_{ E^{(4)}}=16, & \check{\mc{D}}_{ E^{(5)}}=12, & \check{\mc{D}}_{ E^{(6)}}=16.\end{array}  \]
Therefore, the upper bound provided by Theorem~\ref{tm:nong} and (\ref{eq:xjointsphGNEPdegree}) is $190$.
For the symbolic computation, we remark that there are $6$ cases of active sets,
and all $I_{E^{(j)}}\, (j=1\ddd 6)$ has $4$ variables.
The generators and their multi-degrees are presented in the following:
\begin{itemize}
  \item $I_{E^{(1)}} = \lip g_{1,1}, \mathfrak{m}_{1,1}, \frac{\pt f_2}{\pt x_{2,1}}, \frac{\pt f_2}{\pt x_{2,2}} \rip$: $\deg(g_{1,1}) = (1,1)$, $\deg(\mathfrak{m}_{1,1}) = (2,3)$, and $\deg(\frac{\pt f_2}{\pt x_{2,1}}) = \deg(\frac{\pt f_2}{\pt x_{2,2}}) = (2,2)$.

  \item $I_{E^{(2)}} = \lip g_{1,1}, g_{2,1}, \mathfrak{m}_{1,1}, \mathfrak{m}_{2,1} \rip$: $\deg(g_{1,1}) = (1,1)$, $\deg(g_{2,1}) = (2,2)$, $\deg(\mathfrak{m}_{1,1}) = (2,3)$, and $\deg(\mathfrak{m}_{2,1}) = (2,3)$.

  \item $I_{E^{(3)}} =  \lip g_{1,1}, g_{1,2}, \frac{\pt f_2}{\pt x_{2,1}}, \frac{\pt f_2}{\pt x_{2,2}} \rip $:
  $\deg(g_{1,1}) = (1,1)$, $\deg(g_{1,2}) = (1,0)$, and $\deg(\frac{\pt f_2}{\pt x_{2,1}}) = \deg(\frac{\pt f_2}{\pt x_{2,2}}) = (2,2)$.

  \item $I_{E^{(4)}} =  \lip g_{1,1}, g_{1,2},  g_{2,1}, \mathfrak{m}_{2,1} \rip $:
  $\deg(g_{1,1}) = (1,1)$, $\deg(g_{1,2}) = (1,0)$, $\deg(g_{2,1}) = (2,2)$, and $\deg(\mathfrak{m}_{2,1}) = (2,3)$.

  \item $I_{E^{(5)}} =  \lip g_{1,1}, g_{1,3}, \frac{\pt f_2}{\pt x_{2,1}}, \frac{\pt f_2}{\pt x_{2,2}} \rip $:
  $\deg(g_{1,1}) = (1,1)$, $\deg(g_{1,3}) = (1,0)$, and $\deg(\frac{\pt f_2}{\pt x_{2,1}}) = \deg(\frac{\pt f_2}{\pt x_{2,2}}) = (2,2)$.

  \item $I_{E^{(6)}} =  \lip g_{1,1}, g_{1,3}, g_{2,1}, \mathfrak{m}_{2,1} \rip $:
  $\deg(g_{1,1}) = (1,1)$, $\deg(g_{1,3}) = (1,0)$, $\deg(g_{2,1}) = (2,2)$, and $\deg(\mathfrak{m}_{2,1}) = (2,3)$.
\end{itemize}
Using symbolic computation software {\tt Macaulay2},
we find that $V(I_{E^{(1)}})$ has $60$ points,
$V(I_{E^{(2)}})$ has $74$ points,
$V(I_{E^{(3)}})$ and $V(I_{E^{(5)}})$ has $12$ points,
and $V(I_{E^{(4)}})$ and $V(I_{E^{(6)}})$ has $16$ points (all counting multiplicities).
Besides that, all these affine varieties are pairwisely disjoint.
Thus, the Fritz-John variety has exactly $190$ points (counting multiplicities).
Interestingly, if we apply formula (\ref{eq:GNEPdeg}), i.e., we use $\hat{d}_{2,1} = (2,1)$ instead of $\check{d}_{2,1}=(0,1)$, to compute the upper bound of algebraic degree for each active set,
then we get
\[\begin{array}{ccc}
{\mc{D}}_{ E^{(1)}}=60, & {\mc{D}}_{ E^{(2)}}=106, & {\mc{D}}_{ E^{(3)}}=12, \\
{\mc{D}}_{ E^{(4)}}=20, & {\mc{D}}_{ E^{(5)}}=12, & {\mc{D}}_{ E^{(6)}}=20,\end{array}  \]
hence the upper bound given by Theorem~\ref{tm:GNEPdegreeinactive} is $230$,
which is not sharp and looser than that given by Theorem~\ref{tm:nong}.
\end{example}

\subsection{GNEPs with generic quadratic constraints}
Consider the two-player GNEP
\be\label{eq:jljqGNEP}
\begin{array}{cllcl}
    \min\limits_{x\in \re^2 }& f_1(x,y) &\vline&
    \min\limits_{y\in \re^2 }& f_2(x,y) \\
    \st & g_1(x,y)=0,&\vline& \st &g_2(x,y)=0,\\
\end{array}
\ee
where $g_1$ and $g_2$ has multi-degrees $d_{1,1}=(2,2),\ d_{2,1}=(2,2)$ respectively.
Moreover, the $\nu=(2,2)$, $m=2$, and
\[\hat{d}_{1,1}=(1,2),\ \hat{d}_{2,1}=(2,1).\]
Suppose the multi-degrees for $f_1$ and $f_2$ are $d_{1,0}$ and $d_{2,0}$.
Since all constraints are active, by Theorem~\ref{tm:GNEPdegree},
the upper bound for the algebraic degree is given by (\ref{eq:GNEPdeg}).

\begin{example}\label{ep:3player}
Consider the GNEP (\ref{eq:jljqGNEP}) defined by
\[\begin{array}{rl}
f_1(x,y)=&- 2x_1^3y_1^2 - 2x_1^3y_1 - x_1^3y_2^2 - x_1^3 - 2x_1^2x_2y_1y_2+ 3x_1^2x_2y_1\\
&- 4x_1^2x_2y_2 + 5x_1^2x_2 - x_1^2y_1^2- 2x_1^2y_1y_2+ 3x_1^2y_1- 4x_1^2y_2\\
&+ 4x_1x_2^2y_1^2+ 2x_1x_2^2y_1 + 2x_1x_2y_1- 2x_1x_2y_2+ 2x_1x_2\\
&+ 2x_1y_1^2 + 2x_1y_1y_2 - x_1y_1- x_2^3y_1y_2- 3x_2^3y_2^2+ 2x_2^3y_2+ 2x_2^3 \\
& - 4x_2^2y_1y_2+ 3x_2^2y_2^2- x_2^2y_2+ 2x_2y_1^2+ 2x_2y_1y_2+ 2x_2y_2,\\
\end{array}
\]
\[\begin{array}{ll}
f_2(x,y)=&3x_1^2y_1^2 - 2x_1^2y_1 - 3x_1^2y_2^2 + 4x_1^2y_2 - x_1x_2y_1^2- 6x_1x_2y_1y_2^2\\
&- x_1x_2y_1y_2 - x_1x_2y_2^2- 2x_1x_2y_2+ 2x_1y_1^3 + 2x_1y_1^2 - x_1y_1y_2^2\\
&- 2x_1y_2 - 3x_2^2y_1^3+ 4x_2^2y_1y_2^2 - 2x_2^2y_1+ 2x_2^2y_2 - 2x_2y_1^3- x_2y_1^2\\
&- x_2y_1+ 4x_2y_2^3 - x_2y_2^2 + 4x_2y_2 + 2y_1^3 + 4y_1^2y_2,
\end{array}
\]
\[\begin{array}{ll}
g_1(x,y)=&- 2x_1^2y_1^2 + 2x_1^2y_1y_2 + 3x_1^2y_1 + 4x_1^2y_2^2 + 2x_1^2y_2 + 3x_1^2 + x_1x_2y_1y_2\\
 &+ 3x_1x_2y_1 - x_1x_2y_2^2 + 2x_1x_2y_2 + x_1x_2 + 6x_1y_1^2 - 2x_1y_1y_2 + x_1y_1  \\
 &+ 7x_1y_2^2 + 8x_1y_2 - x_2^2y_1^2 - 5x_2^2y_1y_2 - 2x_2^2y_1 + 3x_2^2y_2^2 - 2x_2^2y_2 + 2x_2^2  \\
 &+ 3x_2y_2^2 + 2x_2y_2 + 2x_2 + 2y_1^2 + y_1y_2 + 4y_1 - 2y_2^2 - 4y_2 + 2
\end{array}
\]
\[\begin{array}{ll}
g_2(x,y)=&2x_1^2y_1y_2 + 2x_1^2y_1 + 2x_1^2y_2^2 + x_1^2y_2 - 2x_1^2+ x_1x_2y_1^2- x_1x_2y_1y_2\\
&+ 3x_1x_2y_1 - x_1x_2y_2^2+ 2x_1x_2y_2 + x_1x_2 - 2x_1y_1^2 + x_1y_1y_2\\
& - 3x_1y_1+ 2x_1y_2^2 + x_1y_2+ x_1 - 3x_2^2y_1^2+ 3x_2^2y_1y_2 - 3x_2^2y_1\\
&+ 2x_2^2y_2^2+ 3x_2^2- x_2y_1^2 + x_2y_1y_2 + 3x_2y_1- x_2y_2^2+ 3x_2y_2\\
&+ x_2 - 3y_1^2 - y_1y_2 + y_1 + 3y_2 + 5.
\end{array}
\]
For objective functions, the multi-degrees are $d_{1,0}=(3,2)$, $d_{2,0}=(2,3)$.
Therefore, the upper bound of the algebraic degree provided by (\ref{eq:GNEPdeg}) equals 296.
Since there are no inequality constraints,
there is only one case of the active set $E = (\{1\},\{1\})$.
The ideal $I_E$ has $4$ variables, and is generated by $4$ polynomials, that
\[ \begin{gathered}
I_E = \lip g_1, g_2, \mathfrak{m}_{1,1}, \mathfrak{m}_{2,1} \rip, \\
\deg(g_1) = \deg(g_2) = (2,2),\quad \deg(\mathfrak{m}_{1,1}) = (3,4),\quad 
\deg(\mathfrak{m}_{2,1}) = (4,3).
\end{gathered}\]
By the symbolic computation software {\tt Macaulay2}, 
we find that the Fritz-John variety has exactly $296$ points (counting multiplicities).
\end{example}

\subsection{The 3-player GNEP with inner product constraints}
Consider the three-player GNEP
\be\label{eq:3ipGNEP}
\begin{array}{cllcllcl}
    \min\limits_{x \in \re^2 }& f_1(x,y,z) &\vline&
    \min\limits_{y \in \re^2 }& f_2(x,y,z) &\vline&
    \min\limits_{z \in \re^2 }& f_3(x,y,z) \\
    \st & x^Ty=0, &\vline& \st & y^Tz=0, &\vline& \st & z^Tx=0,\\
\end{array}
\ee
where $g_1$, $g_2$ and $g_3$ has multi-degrees $d_{1,1}=(1,1,0),\ d_{2,1}=(0,1,1)$ and $d_{3,1}=(1,0,1)$ respectively.
Moreover, the $\nu=(2,2,2)$, $m=3$, and
\[\check{d}_{1,1} = \hat{d}_{1,1}=(0,1,0),\ \check{d}_{2,1}= \hat{d}_{2,1}=(0,0,1),\ \check{d}_{3,1}= \hat{d}_{3,1}=(1,0,0).\]
Suppose the multi-degree for $f_1$, $f_2$ and $f_3$ are $d_{1,0}$, $d_{2,0}$ and $d_{3,0}$.
Let $\check{d}_{1,0}$, $\check{d}_{2,0}$ and $\check{d}_{3,0}$ be defined as in Theorem~\ref{tm:nong}.
Since all constraints are active, by Theorem~\ref{tm:nong},
the upper bound for the algebraic degree is given by (\ref{eq:GNEPdegnong}).

\begin{example}
Consider the GNEP (\ref{eq:jljqGNEP}) where the objective functions are
\[\begin{array}{ll}
f_1(x,y,z)=&6x_1 + 3x_2 + 2x_1y_1 - 2x_2y_1 - 2x_1z_1 + 7x_1z_2 + 3x_2z_1 + 2x_2z_2 \\
&+ 2x_1^2y_1 - x_1^2y_2 + 3x_2^2y_1 - x_1^2z_2 + 2x_2^2z_2 - 2x_1^2y_1z_1 - 2x_1^2y_1z_2\\
&- 5x_1^2y_2z_1 + 3x_2^2y_1z_1 + 3x_1^2y_2z_2 - x_2^2y_1z_2 - 2x_2^2y_2z_1 - 2x_2^2y_2z_2\\
&- 3x_1x_2y_1 - x_1x_2z_1 + 3x_1x_2z_2 + 2x_1y_1z_2 + 2x_2y_1z_1 + 3x_1y_2z_2 \\
&+ 4x_2y_1z_2 + 3x_2y_2z_1 + 2x_2y_2z_2 - x_1x_2y_2z_2,
\end{array}
\]
\[\begin{array}{ll}
f_2(x,y,z)=&- 2y_2 - 2x_1y_1 + 6x_2y_1 - 2y_1y_2 + 3y_1z_2 + 2y_2z_2 \\
&- 3x_2y_1^2- x_2y_2^2+ 4y_1^2z_1 - y_2^2z_1 + 2y_2^2z_2 + 3y_1^2 + 5y_2^2  \\
&+ 4x_1y_1^2z_2+ 2x_2y_1^2z_1- 2x_1y_2^2z_2- x_2y_1^2z_2 + 2x_2y_2^2z_2    \\
&- x_1y_1y_2- 2x_1y_1z_1- x_2y_1z_1+ 3x_1y_2z_2- 3x_2y_2z_1  \\
&- y_1y_2z_1- 2y_1y_2z_2 + 2x_1y_1y_2z_1 + 2x_1y_1y_2z_2 - x_2y_1y_2z_1,
\end{array}
\]
\[\begin{array}{ll}
f_3(x,y,z)=&- 4z_2 + 2x_1z_2 - x_2z_1 - 2x_2z_2 + 3y_1z_2 + 2y_2z_1 + 3y_2z_2\\
&+ 3z_1z_2 - 4x_1z_1^2 - x_1z_2^2 - x_2z_1^2 + 2x_2z_2^2 - 3y_1z_1^2 - 2y_1z_2^2\\
&- 2y_2z_1^2 - z_1^2 - 2z_2^2 + 3x_1y_1z_2^2 - x_2y_1z_1^2 - x_2y_2z_1^2 - x_2y_2z_2^2\\
&+ 2x_1y_1z_1 + 3x_2y_2z_1 - 4x_2z_1z_2 - 2x_1y_1z_1z_2 - 3x_1y_2z_1z_2.
\end{array}
\]
For objective functions, the multi-degrees are $d_{1,0}=(2,1,1)$, $d_{2,0}=(1,2,1)$,
and $d_{3,0}=(1,1,2)$.
The algebraic degree provided by (\ref{eq:GNEPdegnong}) of this GNEP is $74$.
Since there does not exist inequality constraints,
there is only one case of the active set $E = (\{1\},\{1\},\{1\})$.
The ideal $I_E$ has $6$ variables, and is generated by $6$ polynomials, that
\[ \begin{gathered}
I_E = \lip g_1, g_2, g_3, \mathfrak{m}_{1,1}, \mathfrak{m}_{2,1}, \mathfrak{m}_{3,1} \rip, \\
\deg(g_1) = (1,1,0),\quad \deg(g_2) = (0,1,1),\quad \deg(g_3) = (1,0,1),\\
\deg(\mathfrak{m}_{1,1}) = (1,2,1),\quad \deg(\mathfrak{m}_{2,1}) = (1,1,2), \quad \deg(\mathfrak{m}_{3,1}) = (2,1,1).
\end{gathered}\]
By the symbolic computation software {\tt Macaulay2}, 
we find that the Fritz-John variety has exactly $74$ points (counting multiplicities).

Last, we remark that in this GNEP, since $\hat{d}_{i,j} = \check{d}_{i,j}$
holds for all $i=1\ddd 3$ and $j=0,1$, thus the upper bound for the algebraic degree given by (\ref{eq:GNEPdegnong})
is equal to that given by (\ref{eq:GNEPdeg}).
\end{example}

\subsection{The 2-player GNEP with a joint sphere constraint}
We present a GNEP which has infinitely many complex Fritz-John points.
Consider the two-player GNEP
\be\label{eq:jointsphere}
\begin{array}{cllcl}
    \min\limits_{x \in \re^3 }& f_1(x,y) &\vline&
    \min\limits_{y \in \re^3 }& f_2(x,y) \\
    \st & \Vert x\Vert^2+\Vert y\Vert^2 = 1 ,&\vline& \st & \Vert x\Vert^2+\Vert y\Vert^2 = 1.\\
\end{array}
\ee
In the GNEP, $\nu = (3,3)$, $m_1=m_2=1$ and $m=2$.
However, the constraining polynomials $g_{1,1}$ and $g_{1,2}$ are identical.
Recall that the affine varieties $\mc{U}_1$ and $\mc{U}_2$
are given by $g_{1,1}(x)=0$ and $g_{2,1}(x)=0$, respectively.
We have $\mc{U}_1 = \mc{U}_2$, and
\[ \cod\,\mc{U} = \cod\, \mc{U}_1\cap\mc{U}_2  = 1. \]
Also, the determinantal varieties $\mc{V}_1$, $\mc{V}_2$
has codimensions not greater than $2$, by \cite[Theorem~17.25]{Harris1992}.
So, the codimension for the Fritz-John variety cannot exceed
\[\cod\,\mc{U} + \cod\,\mc{V}_1 + \cod\,\mc{V}_2 = 5.\]
In other words, if the Fritz-John variety $\mc{W}$ is nonempty,
then $\dim \mc{W} \ge 1$ and there are infinitely many complex Fritz-John points.

\begin{example}
Consider the GNEP (\ref{eq:jointsphere}) where the objective functions
$f_1$ and $f_2$ are given as in Example~\ref{ep:ballNEP}.
Then, $d_{1,0} = (2,1)$, $d_{1,1} = (2,2)$, $d_{2,0} = (1,2)$, and $d_{2,1} = (2,2)$.
Also, since the constraints are not generic,
we have $\check{d}_{1,1} = (1,0)$ and $\check{d}_{2,1} = (0,1)$.
Since there are no inequality constraints,
there is only one case of active set $E$.
The ideal $I_E$ has $6$ variables, and is generated by $8$ polynomials, that
\[ \begin{gathered}
I_E = \lip g_1, g_2, \mathfrak{m}_{1,1}, \mathfrak{m}_{1,2}, \mathfrak{m}_{1,3}, \mathfrak{m}_{2,1}, \mathfrak{m}_{2,2}, \mathfrak{m}_{2,3} \rip, \\
\deg(g_1) = (2,2),\quad \deg(g_2) = (2,2),\\
\deg(\mathfrak{m}_{1,1}) = \deg(\mathfrak{m}_{1,2}) = \deg(\mathfrak{m}_{1,3}) = (2,1),\\
\deg(\mathfrak{m}_{2,1}) = \deg(\mathfrak{m}_{2,2}) = \deg(\mathfrak{m}_{2,3}) = (1,2).
\end{gathered}\]
By the symbolic computation software {\tt Macaulay2}, 
we find that $\dim(I_E) = 1$, which certifies that 
this GNEP has infinitely many Fritz-John points.
\end{example}

\section{Conclusions and Discussions}
This paper studies the generalized Nash equilibrium problems defined by polynomials.
We show that under some generic conditions,
the Fritz-John system of the GNEP only has finitely many complex solutions,
and all of them satisfy the KKT conditions for each player's optimization.
Moreover, when the GNEP is defined by generic polynomials,
we give a formula for the algebraic degree of the GNEP,
which counts the number of complex Fritz-John points.
When the GNEP is given by non-generic polynomials,
the algebraic degree is also studied.

For future work, an interesting question is to find an upper bound
for the number of real KKT points
when the GENP is given by polynomials with real coefficients.
Certainly, all real KKT points are in the Fritz-John variety,
so (\ref{eq:GNEPdeg}) and (\ref{eq:GNEPdeginactive}) still give upper bounds for the number of them.
However, since (\ref{eq:GNEPdeg}) and (\ref{eq:GNEPdeginactive}) count the number of complex points,
and inequalities in the KKT system are ignored,
the upper bound given by (\ref{eq:GNEPdeg}) and (\ref{eq:GNEPdeginactive}) are usually much greater than the number of real KKT points.
In general, how to find a formula to count the number of real Fritz-John or KKT points for a GNEP
is still mostly open, to the best of the authors' knowledge.
For instance, for the classical unconstrained polynomial optimization,
say, with degree $d$ and $n$ variables, we do not know
the formula to count its number of real KKT points.
In Sections~\ref{sc:pre} and \ref{sc:finite}, when the ground field is real and all polynomials have only real coefficients,
the same conclusions there still hold for each irreducible component of the complex varieties
defined by these real polynomials.
Moreover, each component of the complex variety has the same dimension as its real parts unless the real part is empty or is contained in the singular locus of the component.
In Section~\ref{sc:degree}, the number of complex solutions is the same
when the polynomials are generic with real coefficients.
The number of real solutions, however, may vary a lot.
For instance, for the very special case of minimizing real univariate polynomials of degree $3$
without any constraints,
the KKT set is given by a quadratic equation with real coefficients.
The number of real KKT points reaches the maximum in an open Euclidean set in the space of real coefficients (when the discriminant is positive),
while that number is zero for another open Euclidean set (when the discriminant is negative).
Therefore, when considering the number of real KKT points, the notion of generic polynomials makes less sense.
The maximal number of real KKT points may be attained in an Euclidean open set in the space of coefficients, but whether that maximum number equals the number of complex solutions is not clear to us.
Even if we bound degrees of all polynomials, unless they are all ones,
we do not know a general statement about the maximum number of real Fritz-John or KKT points.

Another interesting topic for future work is to count the algebraic degree of GNEPs
when their defining polynomials have sparsity.
In \cite{sottile2001enumerative}, the algebraic degrees
for sparse polynomial systems with a fixed {\it monomial support,}
i.e., the monomial powers where
the associate coefficients are nonzero, are considered.
Algebraic degrees for polynomial optimization with
a fixed monomial support are studied in \cite{lindberg2023algebraic}.
The mixed volumes of corresponding Newton polytopes are exploited in \cite{lindberg2023algebraic,sottile2001enumerative}
to give the algebraic degree for these sparse polynomial systems.
Thus, one wonders if this approach can be applied
to find algebraic degrees for GNEPs with sparsity.
More specifically, for the fixed supports of $f_i$ and $g_{i,j}$ $(i\in[N],j\in[m_i])$,
when the coefficients are generic, is it true that
the GNEP has finitely many Fritz-John points?
Does the LICQC still hold at all Fritz-John points?
Moreover, how can get a clear formula for the number of complex Fritz-John points
for generic GNEPs with a fixed support?
These are interesting questions for future work.

\section*{Acknowledgements}
Jiawang Nie is partially supported by the NSF grant DMS-2110780.
Xindong Tang was partially supported by the Start-up Fund P0038976/BD7L
during his former position at the Hong Kong Polytechnic University.

\bigskip 
\noindent
{\small \textbf{Conflict of Interest}\quad  
The authors declare that they have no conflict of interest.}

\end{document}